\newtheorem{theorem}{Theorem}[section]
\newtheorem{proposition}[theorem]{Proposition}
\newtheorem{corollary}[theorem]{Corollary}
\theoremstyle{definition}
\theoremstyle{remark}
\newtheorem{remark}[theorem]{Remark}
\numberwithin{equation}{section}
\begin{document}
\setcounter{page}{1}

\title[Harnack parts]{Harnack parts for some truncated shifts}
\author[G. Cassier, M. Benharrat]{Gilles Cassier$^{1*}$, Mohammed Benharrat$^2$}
\address{$^{1}$ Universit\'e de Lyon 1; Institut Camille Jordan CNRS UMR 5208; 43, boulevard du 11 Novembre 1918, F-69622 Villeurbanne.
}

\email{\textcolor[rgb]{0.00,0.00,0.84}{cassier@math.univ-lyon1.fr}}
\address{$^{2}$ D\'epartement de Math\'ematiques et informatique, Ecole Nationale Polytechnique d'Oran-Maurice Audin;  B.P. 1523  Oran-El M'Naouar, Oran, Alg\'erie.
}
\email{\textcolor[rgb]{0.00,0.00,0.84}{mohammed.benharrat@gmail.com, benharrat@math.univ-lyon1.fr}}


\subjclass[2010]{Primary 47A12, 47A20, 47A65; Secondary 15A60.}

\keywords{Truncated shift, $\rho$-contractions, Harnack parts, Operator kernel,  Numerical radius.}

\date{ 02/12/2019.
\newline \indent $^{*}$ Corresponding author}
\begin{abstract}
The purpose of this paper is to analysis the Harnack part of some truncated shifts whose $\rho$-numerical radius equal one in the finite dimensional case. As pointed out in Theorem 1.17
\cite{CaBeBel2018}, a key point is to describe the null spaces of the $\rho$-operatorial kernel of these truncated shifts. We establish two fundamental results in this direction and some applications are also given.
\end{abstract}
 \maketitle
\section{Introduction and preliminaries}

Shift operators are one of the most important models in operator theory. On the one hand it is an interesting class of operators in providing examples and  counter-examples  to  illustrate  many  properties in operator theory. On the other hand, these operators  becomes   a fundamental building block in the structure theory of Hilbert space operators.
For instance, they play a fundamental role in dilation theory (see \cite{SzN}) and also in the Jordan canonical form of matrix in the finite dimensional case. They also appear in the constants 
involved in constrained von Neumann inequalities (see \cite{BaCa}).  Also it is used as a mathematical tool in several areas (quantum mechanics, control theory,\dots ). For  a good survey of properties of the shift, we can see   \cite{Nikolski86, Shields74} and the references therein. 

The purpose of this paper is  to establish a new role of shift operators in  the class of the $\rho$-contractions. More precisely, we study the Harnack parts of the following so called $w_{\rho}$-normalized truncated shift $S$ of size $n+1$ defined in the canonical basis by
\begin{equation}\label{truncated shift}
S= S_{n+1}(a)
\end{equation} 
where
$$  S_{n+1}(b)  =\begin{pmatrix}
0&b& & &0\\
&0&\ddots& &\\
& &\ddots& &b \\
0& & & & 0\\
\end{pmatrix}.
$$
and $a=w_\rho(S_{n+1})^{-1}=w_\rho(S_{n+1}(1))^{-1}$ ($w_\rho(.)$ is the $\rho$-numerical radius defined bellow). For $\rho=2$ this sequence is completely described by U. Haagerup and P. De la Harpe in \cite{HaaDe92} and given by
$$((\cos\dfrac{\pi}{n+1})^{-1})_{n\geq 2}.$$
Also the same authors gave more attention of the importance of this remarkable sequence by saying that: " Its recent popularity is due the work of V. Jones on index of subfactors and all that \cite{Jones83}, but it appears in many domains, such as elementary geometry of regular polygons \cite[Formula 2.84]{Coxeter61}, graph theory or Fuchsian groups \cite{GoDeJon89}, to mention but a few", see \cite[p. 379]{HaaDe92}. Unfortunately, we don't have  an explicit formula for $a_k(\rho)$ when $\rho\neq 2$, but this ubiquitous sequence is fundamental in order to analysis the class of $\rho$-contractions, in particular this work shows the crucial role of this sequence in the description of the Harnack part of $w_{\rho}$-normalized truncated shift $S$.

Before to give our mains results, we recall some facts about the class of $\rho$-contractions as well as the Harnack relation  in the general  Hilbert spaces setting.
Let $H$ be a complex Hilbert space and  $B(H)$ be the Banach algebra of all
bounded linear operators on  $H$. For $\rho>0$,  an operator $T\in B(H)$ is called  a \emph{ $\rho$-contraction} if $T$ admits a {\it{unitary $\rho$ dilation}} (see for instance
 \cite{SzNF1} and \cite{SzNFBK}). It means that there is a Hilbert space $\mathcal{H}$ containing $H$ as a closed subspace and a unitary operator $U\in B(\mathcal{H})$ such that 
\begin{equation}\label{eq:rhodil}
T^n=\rho P_H U^n|H,\quad
n\in\mathbb{N}^{\ast},
\end{equation}
where $P_H$ is the orthogonal projection onto the subspace $H$ in $\mathcal{H}$. Denote 
by $C_\rho (H)$ the set of all $\rho$-contractions. In connection with this, J.~A.~R.~Holbrook \cite{H1} and J.~P.~Williams \cite{W},
independently, introduced  the 
$\rho$-\emph{numerical radius}
(or the \emph{operator radii} ) of an operator $T\in B(H)$ by setting
\begin{equation}\label{eq:1-rad}
w_\rho(T):=\inf\{ \gamma>0:\frac{1}{\gamma}T\in C_\rho (H)\}.
\end{equation}
 Also, $T\in C_\rho (H)$ if and only if $w_\rho(T)\leq 1$. In particular, when $\rho=1$  and $\rho = 2$, this definition reduces to contractions and operators whose numerical ranges are contained in the closed unit disc (see \cite{Be}), respectively.

Let $T_0$, $T_1$ in $C_\rho (H)$, recall that $T_1$ is Harnack dominated by $T_0$ (see
\cite{CaSu}) if there exists a constant $c \geq 1$ such that
$$
\Re p(T_1)\leq c^{2} \Re p(T_0)+ (c^{2}-1)(\rho -1)\Re p(0_{H})
$$
for any polynomial $p$ with $\Re p \geq 0$ on $\overline{\mathbb{D}}$, where $\Re z $ is the real part of a complex number $z$.

The following operatorial $\rho$-kernel (see \cite{Cassier_1, CaF_1, CaF_2})
\begin{equation}
K_{z}^{\rho} (T) =(I-\overline{z}T)^{-1}+(I-zT^{*})^{-1}+ (2-\rho)I, \quad (z\in \mathbb{D}),
\end{equation}
associated with any bounded operator $T$ having its spectrum in the closed unit disc $\overline{\mathbb{D}}$, plays a central role in Harnack analysis of operators (see for instance \cite{Cassier_2, CaSu, CaBeBel2018}). It is essentially due
to the fact that it allows us to use directly harmonic analysis methods in this setting. 

The $\rho$-kernels are related to $\rho$-contraction by the next result. An operator $T$ is in the class $C_\rho (H)$ if and only if, $\sigma(T)\subseteq \overline{\mathbb{D}}$ and
$K_{z}^{\rho} (T)\geq 0$ for any $z\in \mathbb{D}$  (see \cite{CaF_2}). For $ T_{1}, T_{0}\in C_\rho (H)$; it is proved in \cite{CaSu} that $T_{1}$ is Harnack dominated by $T_{0}$, denoted by $T_{1}\stackrel{H}{\prec} T_{0}$, if and only if $T_{1}$ and $T_{0}$ satisfy
\begin{equation}\label{harnack}
K_{z}^{\rho} (T_{1}) \leq c^{2} K_{z}^{\rho} (T_{0}) \quad \text{ for all } z\in \mathbb{D}
\end{equation}
 for some constant $c \geq 1$. A detailed description
of these  Harnack domination and other  equivalent definitions are given in \cite[ Theorem 3.1]{CaSu}.

 The relation $\stackrel{H}{\prec}$ is a preorder relation (reflexive and transitive) in $C_\rho (H)$ and  induces an  equivalent relation,  called Harnack equivalence. The associated equivalence classes are called the Harnack parts of $C_\rho (H)$. So, we say that $T_{1}$ and $T_{0}$ are Harnack equivalent and  we write $T_{1}\stackrel{H}{\sim} T_{0}$, if they belong to the same Harnack parts.  Classifying the equivalence classes induced by this preorder relation  is  a complicated question and is an important topic discussed by many authors. In \cite[Theorem 4.4]{CaSu}; it is shown  that the Harnack parts of $C_\rho (H)$ containing the null operators $O_H$  is exactly the class  of all strict $\rho-$contractions; that is  a  $\rho-$contraction such that $w_{\rho} (T)<1$. This result extends the earlier work   of  Foia\c{s}   ,\cite{Foias}, from contraction class to  $\rho$-contractions. An interesting question is now to describe the Harnack parts of $\rho$-contractions $T$ with $\rho$-numerical radius one. A  few answers in the literature of the previous question are given, essentially  in the class of contractions  with norm one.  In \cite{AnSuTi}, the authors have proved that if $T$ is either isometry or coisometry contraction then the Harnack part of $T$ is trivial (i.e. equal to $\{T\}$), and if $T$ is compact or $r(T)<1$, or normal and nonunitary, then its Harnack  part is not trivial in general. 
  It was proved in \cite{KSS} that the Harnack part of a contraction $T$ is trivial  if and only if $T$ is an isometry or a coisometry (the adjoint of an isometry), this  a response of the question posed by  Ando et al. in the class  of contractions. The authors  of \cite{BaTiSu} proved that maximal elements for the  Harnack domination in $C_1(H)$ are precisely the singular unitary operators and the minimal elements are isometries and 
 coisometries. 
  
  Recently in \cite{CaBeBel2018}, it is proved that if $T_{0}$ is a compact operator
  (i.e. $T_{0} \in \mathcal{K}(H)$) with $ w_\rho(T_{0})=1$ and with no spectral values in the torus $\mathbb{T}$, then a $\rho$-contraction $T_{1} \in \mathcal{K}(H)$ with $\sigma(T_1)\cap \mathbb{T}=\emptyset$, is Harnack equivalent to $T_{0}$ if and only if $K_{z}^{\rho} (T_{0})$ and $K_{z}^{\rho} (T_{1})$ have the same null space for all $z\in\mathbb{T}$ ({\it{null spaces condition}}) and satisfy a {\it{conorms condition}} (see  \cite[Theorem 1.17]{CaBeBel2018}). Moreover, when the dimension of the null space  of $K_{z}^{\rho} (T_{0})$ is constant over $\mathbb{T}$, the {\it{null spaces condition}} is necessary and sufficient. As a corollary,  if $T_{0}$ is a compact contraction with $ \Vert T_{0}\Vert=1$  and $r(T_{0})<1$, then a contraction
  $T_{1} \in \mathcal{K}(H)$ is Harnack equivalent to $T_{0}$ if and only if $I-T_{0}^*T_{0}$ and
  $I-T_{1}^*T_{1}$ have the same kernel and $T_{0}$ and $T_{1}$ restricted to the kernel of $I-T_{0}^*T_{0}$ coincide. A nice application is the  description  of the Harnack part of the (nilpotent) Jordan block of size $n+1$ (which is the $w_{1}$-normalized truncated shift of size $n$) as a contraction. More generally, it seems that $w_{\rho}$-normalized truncated shift of size $m$ play a crucial role in Harnack analysis of class $C_{\rho} (H)$.

  Further, the Harnack part of the $w_{2}$-normalized truncated shift is also given in two  dimensional and  three dimensional case.  Surprising, in the first case the Harnack part is trivial, while in the  second case the Harnack part is an orbit associated with the action of a group of unitary  diagonal matrices. We notice that the structure of the Harnack part of the normalized truncated shift are of different nature when $\rho=1$ and $\rho=2$ and, in the later case,  depend on the parity of dimension. According to this fact, it is  naturally  to ask about the structure of the  Harnack part of the $w_{2}$-normalized truncated shift in the case of the dimension is more than three, or more generally of the $w_{\rho}$-normalized truncated shift.

Our purpose in this article is to investigate  the Harnack part of  the $w_{\rho}$-normalized truncated shift $S$ (section 3). This will be  done by describing the null space of the $\rho$-operatorial kernel of any elements in the Harnack part of $S$ (section 2).  We conclude the paper with some open problems related to our results.

\section{Null space of operatorial kernels of
truncated shifts}\label{sec:2}
Let $S$ the $w_{\rho}$-normalized truncated shift in $C_{\rho}(\mathbb{C}^{n+1})$ defined  by \eqref{truncated shift}. Clearly, $w_\rho(S)=1$ (which justifies the term "$w_{\rho}$-normalized").  Let $T \in C_{\rho}(\mathbb{C}^{n+1})$  in the  Harnack part of  $S$,  as said before  $K_{z}^{\rho} (S)$ and $K_{z}^{\rho} (T)$ have the same null space for all $z\in\mathbb{T}$,   see \cite[Theorem 1.17]{CaBeBel2018}. The next result describes more precisely the null space of the operatorial kernels of all elements of the Harnach part of $S$. 
\begin{theorem}\label{shift}Let $\rho>1$ and  $T \in C_{\rho}(\mathbb{C}^{n+1})$ is Harnack equivalent to $S$. Then 
$$\mathcal{N} (K_{z}^{\rho} (T))=\mathbb{C}(v_0,zv_1, \dots,z^n v_n),\quad  \text{ for all }z\in \mathbb{T},$$
with $v_0\neq 0$, and $v_k= - v_{n-k}$. 
\end{theorem}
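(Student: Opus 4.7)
The plan is to first reduce the computation to the single value $z=1$ and then analyse the matrix $M := K_1^\rho(S)$ using the natural involution $J$ defined by $J e_k = e_{n-k}$. By Theorem 1.17 of \cite{CaBeBel2018}, Harnack equivalence already gives $\mathcal{N}(K_z^\rho(T)) = \mathcal{N}(K_z^\rho(S))$ for every $z\in\mathbb{T}$, so it suffices to describe the null spaces of the $\rho$-kernels of $S$ itself. Since $S$ is nilpotent of order $n+1$ the Neumann series $(I - \bar z S)^{-1} = \sum_{k=0}^{n}\bar z^k S^k$ terminates, and an entrywise computation in the canonical basis (together with $\bar z = z^{-1}$ on $\mathbb{T}$) yields
$$K_z^\rho(S) = D_z\, K_1^\rho(S)\, D_z^{-1}, \qquad D_z := \mathrm{diag}(1, z, z^2, \ldots, z^n),$$
where $D_z$ is unitary for $|z|=1$. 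Consequently $\mathcal{N}(K_z^\rho(S)) = D_z\, \mathcal{N}(K_1^\rho(S))$ and the theorem becomes: $\mathcal{N}(M)$ is one-dimensional, spanned by an antisymmetric vector $(v_0,\ldots,v_n)$ with $v_0 \ne 0$.

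The flip $J$ is a self-adjoint unitary involution and a short computation gives $J S J = S^*$, hence $JMJ = M$. Thus $\mathcal{N}(M)$ is $J$-invariant and decomposes along the symmetric subspace $E_+$ (vectors with $v_k = v_{n-k}$) and the antisymmetric subspace $E_-$ ($v_k = -v_{n-k}$). Non-triviality of $\mathcal{N}(M)$ is forced by $w_\rho(S) = 1$: $K_z^\rho(S)$ is continuous on all of $\mathbb{C}$ (since $S$ is nilpotent), so strict positivity on $\overline{\mathbb{D}}$ would propagate to a slightly larger disc and contradict $w_\rho(S) = 1$; moreover, applying the min-principle to the harmonic function $z \mapsto \langle K_z^\rho(S) u, u \rangle$ shows that a zero of $\lambda_{\min}$ inside $\mathbb{D}$ already forces a zero on $\mathbb{T}$, hence a zero at $z=1$ after conjugation by $D_z$.

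Writing $M v = 0$ row by row and rearranging, the middle rows produce a three-term Chebyshev-type recurrence $v_{i-1} + v_{i+1} = \lambda v_i$ for $1 \le i \le n-1$, while the first and last rows can be combined to give two boundary identities $v_1 = \mu v_0$ and $v_{n-1} = \mu v_n$, for certain explicit rational functions $\lambda, \mu$ of $\rho$ and $a$ (in particular the denominator of $\mu$ contains the factor $\rho - 1$, which is nonzero by hypothesis). Two consequences follow immediately: first, $\dim \mathcal{N}(M) \le 1$, because $v_0$ determines $v_1$ through $\mu$ and then $v_2,\ldots,v_n$ through the recurrence; second, $v_0 \ne 0$ for every nonzero null vector, since $v_0 = 0$ would force $v_1 = 0$ and then $v_k = 0$ for all $k$. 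Combined with the non-triviality above, this gives $\dim \mathcal{N}(M) = 1$, $v_0 \ne 0$, and by uniqueness the null vector is either entirely symmetric ($Jv = v$) or entirely antisymmetric ($Jv = -v$).

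The remaining step -- which I regard as the main technical obstacle -- is to show that the null vector actually lives in $E_-$, equivalently that $M|_{E_+}$ is strictly positive while $M|_{E_-}$ is singular at the critical value $a = w_\rho(S_{n+1}(1))^{-1}$. This requires comparing the smallest positive roots, in $a$, of the two real polynomials $\det M|_{E_+}(a)$ and $\det M|_{E_-}(a)$, and showing that for $\rho > 1$ the root coming from $E_-$ is the smaller one. For low-dimensional cases ($n = 1, 2, 3$) one checks directly that $\rho > 1$ is exactly the threshold making the critical $a$ from $E_+$ strictly larger than the critical $a$ from $E_-$; in general one should exploit the persymmetric structure of $M$ and the explicit form of its $J$-blocks (each itself a small tridiagonal-plus-correction matrix) to run a monotonicity argument in $a$. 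Once this comparison is in place the antisymmetry $v_k = -v_{n-k}$ follows, and combining with the previous steps completes the proof.
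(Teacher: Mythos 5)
Your reduction to $z=1$ via the diagonal unitary $D_z$, the use of the flip $J$ to force the null vector into $E_+$ or $E_-$, the tridiagonal recurrence giving $\dim\mathcal{N}(M)\le 1$ and $v_0\ne 0$, and the non-triviality argument from $w_\rho(S)=1$ all match the paper's proof in substance (one small slip: the boundary relations come from the tridiagonal factor $L_1^\rho(S)=\rho I+2(1-\rho)\Re S+(\rho-2)S^*S$, whose first and last diagonal entries are $\rho$ and $\rho+(\rho-2)a^2$ respectively, so the two constants you call $\mu$ are not equal; this does not affect the conclusions you draw from them). The problem is the last step. Determining the sign $\varepsilon=-1$ is the actual content of the theorem, and you explicitly leave it as ``the main technical obstacle,'' offering only a proposed root-comparison in $a$ between $\det M|_{E_+}$ and $\det M|_{E_-}$, checked for $n\le 3$. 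That is a genuine gap, not a routine verification: nothing in your outline explains why, for all $n$ and all $\rho>1$, the singularity occurs in $E_-$ rather than $E_+$, and the monotonicity-in-$a$ argument you allude to is never set up. Note also that your framing is slightly off: since $\mathcal{N}(M)$ is one-dimensional and $J$-invariant, it suffices to prove $M|_{E_+}$ is \emph{invertible}; you do not need to compare roots or show $M|_{E_-}$ is singular.

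For comparison, the paper closes this gap by two different arguments depending on parity. For $n=2p-1$ it shows that the matrix of the symmetric system is $K_1^\rho(\tilde S)+N$ where $\tilde S=aS_p$ satisfies $w_\rho(\tilde S)=w_\rho(S_p)/w_\rho(S_{2p})<1$ (so $K_1^\rho(\tilde S)>0$ is invertible) and $N$ is a positive rank-one Hankel block; hence the symmetric block is invertible and $\varepsilon=-1$ with no root comparison at all. For $n=2p$ the rank-one perturbation $U\otimes\tilde U$ is \emph{not} self-adjoint ($U\ne\tilde U$), so positivity does not conclude; instead the paper reduces to the scalar identity $\langle K_1^\rho(\tilde S)^{-1}U\mid\tilde U\rangle=-\varepsilon\in\{-1,1\}$, observes this quantity is continuous in $\rho$ hence constant, and computes its limit as $\rho\to+\infty$ (using the bound $a(\rho)\le\rho^{1/2p}$ from \cite{CaSu}) to identify the constant as $+1$. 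If you want to salvage your approach, the even-dimensional case can be finished by your own $E_\pm$ decomposition together with the observation that the symmetric block is a positive-definite plus positive rank-one matrix; but the odd-dimensional case will require an additional idea of the kind the paper supplies, and as written your proof does not contain it.
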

To prove this theorem the following result is needed,
\begin{theorem}\cite[Theorem 1.2]{CaBeBel2018}\label{spectral+torus}
	Let $T_0 , T_1 \in C_{\rho}(H)$, ($\rho \geq 1$), if $T_1 {\stackrel{H}{\prec}} T_0$ then $\sigma(T_1)\cap \mathbb{T} \subseteq \sigma(T_0)\cap \mathbb{T}$.
\end{theorem}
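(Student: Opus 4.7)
My plan is a three-stage reduction, followed by a direct analysis of the null space at the single point $z=1$.

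First, to pass from $T$ to the explicit model $S$: since $T\stackrel{H}{\sim} S$ one has $T\stackrel{H}{\prec} S$ and Theorem~\ref{spectral+torus} yields $\sigma(T)\cap\mathbb{T}\subseteq\sigma(S)\cap\mathbb{T}=\emptyset$ (because $S$ is nilpotent). Hence $K_z^\rho(T)$ is well defined and positive on a neighbourhood of $\mathbb{T}$, and applying the Harnack inequality \eqref{harnack} in both directions between positive matrices forces the null-space equality $\mathcal{N}(K_z^\rho(T))=\mathcal{N}(K_z^\rho(S))$ for every $z\in\mathbb{T}$. The problem thus reduces to describing $\mathcal{N}(K_z^\rho(S))$.

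Second, to rotate to $z=1$, I would introduce the diagonal unitary $D_z=\mathrm{diag}(1,z,\ldots,z^n)$. A direct calculation on basis vectors gives $D_z^{*}S D_z=zS$; substituting in the definition of the $\rho$-kernel yields
$$K_z^\rho(S)=D_z\,K_1^\rho(S)\,D_z^{*}\qquad (z\in\mathbb{T}),$$
so $\mathcal{N}(K_z^\rho(S))=D_z\,\mathcal{N}(K_1^\rho(S))$. Because $D_z(v_0,v_1,\ldots,v_n)=(v_0,zv_1,\ldots,z^n v_n)$, the cyclic $z$-dependence asserted in the theorem is automatic as soon as $\mathcal{N}(K_1^\rho(S))$ is pinned down.

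To exploit a second symmetry at $z=1$, let $J$ be the anti-diagonal involution $Je_k=e_{n-k}$. Since $a>0$ is real one checks $JSJ=S^{*}$, hence $JK_1^\rho(S)J=K_1^\rho(S)$, so $J$ preserves $\mathcal{N}(K_1^\rho(S))$. As $J^2=I$, the null space splits into its $+1$-eigenspace (symmetric vectors, $v_k=v_{n-k}$) and its $-1$-eigenspace (antisymmetric vectors, $v_k=-v_{n-k}$); the antisymmetry clause of the theorem is exactly the assertion that only the $-1$ part contributes.

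The hard part will be to show that $\dim\mathcal{N}(K_1^\rho(S))=1$, that its generator lies in the $-1$-eigenspace of $J$, and that its first coordinate $v_0$ is non-zero. My plan is to use the explicit Toeplitz form $K_1^\rho(S)_{ij}=a^{|i-j|}$ off the diagonal and $4-\rho$ on the diagonal to turn $K_1^\rho(S)v=0$ into a linear system. If $v_0=0$ the remaining $n$ equations coincide with $K_1^\rho(a S_n(1))w=0$; but $a<w_\rho(S_n(1))^{-1}$ (a standard monotonicity: $w_\rho(S_{n+1}(1))>w_\rho(S_n(1))$), so $a S_n(1)$ is a strict $\rho$-contraction, making this smaller kernel strictly positive and forcing $v=0$; this gives $v_0\neq 0$. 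A parallel extremality argument applied on the symmetric $J$-eigenspace rules out non-zero symmetric null vectors, leaving only the antisymmetric subspace, on which solving the resulting linear recurrence produces a unique solution up to scalar. This last dimension-and-sign analysis is where the precise normalisation $a=w_\rho(S_{n+1}(1))^{-1}$ genuinely enters; the earlier two stages are purely symmetry-based and essentially formal.
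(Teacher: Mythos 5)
You have not proved the stated theorem. The statement under review is the spectral inclusion result (Theorem~\ref{spectral+torus}): if $T_1 \stackrel{H}{\prec} T_0$ in $C_\rho(H)$ then $\sigma(T_1)\cap\mathbb{T}\subseteq\sigma(T_0)\cap\mathbb{T}$. What you have written is a proof plan for a different result, namely the description of $\mathcal{N}(K_z^\rho(T))$ for $T$ Harnack equivalent to the normalized truncated shift $S$ (Theorem~\ref{shift} in the paper). Worse, your very first step \emph{invokes} Theorem~\ref{spectral+torus} to conclude $\sigma(T)\cap\mathbb{T}\subseteq\sigma(S)\cap\mathbb{T}=\emptyset$; as an argument for Theorem~\ref{spectral+torus} itself this is circular, and nothing later in your text returns to establish the spectral inclusion.

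A genuine proof of the stated theorem has to extract information about the peripheral spectrum of $T_1$ from the operator inequality $K_z^\rho(T_1)\leq c^2 K_z^\rho(T_0)$ on $\mathbb{D}$. The standard route is: fix $\lambda\in\mathbb{T}\setminus\sigma(T_0)$; then $(I-\overline{z}T_0)^{-1}$, hence $K_z^\rho(T_0)$, stays uniformly bounded as $z\to\lambda$ radially in $\mathbb{D}$; the domination inequality together with $K_z^\rho(T_1)\geq 0$ then bounds $\Re\bigl[(I-\overline{z}T_1)^{-1}\bigr]$ near $\lambda$, and one must convert this bound on the real part into a bound on $\|(I-\overline{z}T_1)^{-1}\|$ itself to conclude $\lambda\notin\sigma(T_1)$. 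None of these steps appear in your proposal. (For what it is worth, the paper does not reprove this statement either: it imports it verbatim as Theorem 1.2 of \cite{CaBeBel2018}. Your sketch, read as an outline of Theorem~\ref{shift}, is broadly consistent with the paper's actual argument there, but that is not the statement you were asked to prove.)
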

\begin{proof}Since  $T \stackrel{H}{\sim} S$, then by  \eqref{harnack}, there exist $c\geq 1$ such that 
\begin{equation}\label{Inequ:Harnack}\frac{1}{c^2}K_{z}^{\rho} (S) \leq  K_{z}^{\rho} (T)\leq c^2 K_{z}^{\rho} (S), \quad \text{ for all } z\in \mathbb{D}.
\end{equation}
Since $\sigma(S)\cap \mathbb{T}$ is empty,  by Theorem \ref{spectral+torus} the operator $S$ does not admit eigenvalues in $\mathbb{T}$. Hence, $K_{z}^{\rho} (T)$ and  $ K_{z}^{\rho}(S)$ are uniformly bounded in $\mathbb{D}$ and may be extended to a positive operators on  $\overline{\mathbb{D}}$ and by \eqref{Inequ:Harnack}\
$$\mathcal{N} (K_{z}^{\rho} (T))=\mathcal{N} (K_{z}^{\rho} (S)) \text{ for all } z\in \mathbb{T}.$$
Since
$$ K_{z}^{\rho} (S) =(I-zS^{*})^{-1}[ \rho I+2(1-\rho) Re (\overline{z}S)+(\rho-2) \left|z\right|^{2} S^{*}S](I-\overline{z}S)^{-1},$$
we have $\dim(\mathcal{N} (K_{z}^{\rho} (T)))=\dim(\mathcal{N} (L_{z}^{\rho} (S))$ for all  $z\in \mathbb{T}$ where
$ L_{z}^{\rho} (S)=\rho I+2(1-\rho) Re (\overline{z}S)+(\rho-2) \left|z\right|^{2} S^{*}S$. Thus,
$$  L_{z}^{\rho} (S)  =
\left(
\begin{array}{cccccc}
\rho  & (1-\rho)\overline{z}a     &0 &\cdots &\cdots           &0 \\
(1-\rho)az             &\rho+(\rho-2)a^2& (1-\rho)\overline{z}a       &\ddots &               &\vdots \\
0           &      (1-\rho)az   & \ddots       &\ddots &\ddots           &\vdots\\
\vdots           &  \vdots               & \ddots       &\ddots &(1-\rho)\overline{z}a          &0\\
\vdots           &   \vdots              &              &\ddots &\rho+(\rho-2)a^2&(1-\rho)a\overline{z}\\
0                &      0    &  \dots       &0   &        (1-\rho)az    &\rho+(\rho-2)a^2
\end{array}
\right)
$$
 If $(x_0, \dots,x_n)\in \mathcal{N} (L_{z}^{\rho} (S)$, then 
 $$\mathcal{S}_1\begin{cases} \rho x_0 + (1-\rho)z a x_1= 0\cr
 (1-\rho)z a x_0 + (\rho+(\rho-2)a^2)x_1 +(1-\rho)\overline{z}a x_2 =0 \cr
\vdots \cr
 (1-\rho)z a x_{n-2} + (\rho+(\rho-2)a^2)x_{n-1} +(1-\rho)\overline{z}a x_n =0\cr
(1-\rho)z a x_{n-1} + (\rho+(\rho-2)a^2)x_{n} =0.
\end{cases} $$ 
 Since $\rho >1$, we can see that a solution of $(\mathcal{S}_1)$  is completely determined by knowledge of  $x_0$, so  the dimension of  $\mathcal{N}( L_{z}^{\rho} (S))$ is at most one for all $z\in \mathbb{T}$ and $\rho > 1$.
 
Let $V(z)=v_0 (z)e_0 +\dots +v_n (z)e_n \neq 0$ in $\mathcal{N}( K_{z}^{\rho} (S))$  for all $z\in \mathbb{T}$. Then
$$\mathcal{S}(z)\begin{cases} \rho v_0 (z) +  a\overline{z} v_1(z)+ \dots +a^{n-1}\overline{z}^{n-1} v_{n-1}(z) +a^n\overline{z}^n v_n(z)= 0\cr
 a z v_0 (z) +  \rho  v_1(z)+ \dots +a^{n-2}\overline{z}^{n-2} v_{n-1}(z)  +a^{n-1}\overline{z}^{n-1} v_n(z)= 0\cr
\vdots \cr
  a^{n} z^{n} v_0 (z) +  \rho  v_1(z)+ \dots+ a z v_{n-1}(z) + \rho v_n(z)=0
\end{cases} \quad (z \in \mathbb{T}).$$

Multiplying the $(k+1)$-th equation by $\overline{z}^{k}$  and putting $w_k(z)=\overline{z}^{k} v_{k}(z)$ for $k=0\dots n$, we get
$$  a^{k}w_0 (z) +  a^{k-1} w_1(z)+ \dots + a w_{k-1}(z) + \rho w_{k}(z) +a w_{k+1}(z)+ \dots + a^{n-k} w_n(z)= 0, $$
for $k=0\dots n$ and for every $z\in \mathbb{T}$. Thus $W(z)=(w_0(z),w_1(z), \dots, w_n (z))$ is a solution of $\mathcal{S}(1)$. On the other hand, since $w_\rho(S)=1$, there exists a $z_0 \in \mathbb{T}$ such that $\mathcal{N} (K_{z}^{\rho} (S))\neq \{0\}$. Hence the dimension of  $\mathcal{N}( K_{z}^{\rho} (S))$ at least one. We derive that $\dim (\mathcal{N} (K_{z}^{\rho} (S)))=1$. Furthermore, there exists $\alpha (z)$ such that $W(z)=\alpha (z)W(1)$, with $\alpha (z)\neq 0$ for all $z\in \mathbb{T}$. We put $v_k =v_k(1)$ and $V=V(1)=W(1)$, then we have $w_k =\alpha (z) v_k$ and $v_k(z)=\alpha (z) z^kv_k$. This implies that $\mathcal{N} \left( K_{z}^{\rho} (T)\right) =\mathcal{N} \left( K_{z}^{\rho} (S)\right)   =\mathbb{C}(v_0,zv_1, \dots,z^n v_n)$ with $v_0 \neq 0$. If we assume that $v_0 =0$, by using the fact that $V$ is a solution of $\mathcal{S}(1)$, step by step we  get $v_1= \dots =v_n =0$, this yields to a contradiction. Now, let $\mathcal{W}e_k=e_{n-k}$, we have $$\mathcal{W}(v_0, v_1, \dots, v_n)=(v_n, v_{n-1}, \dots, v_0)=\alpha (v_0, v_1, \dots, v_n), $$
because $\mathcal{W}(V)$ remains a solution of $\mathcal{S}(1)$.  Since $\mathcal{W}^2 =I$, we deduce  that $V$ is an eigenvector of $\mathcal{W}$ and $\alpha =\pm 1$. We conclude that $v_k=\varepsilon v_{n-k}$, with $\varepsilon =\pm 1$.

Now, to complete the proof of theorem   we prove that $\epsilon=-1$. According to the  parity of the dimension, we distinguish two cases. 

If  $n=2p-1$, assume that $v_{n-k}=v_k$ for $k=0, \dots, p-1$. Then
$$\mathcal{S}(1)\begin{cases} \rho v_0  +  a v_1+ \dots +a^{p-1}v_{p-1} +a^p v_{p-1} \dots + a^{2p-1}v_{2p-1}= 0\cr
 a v_0  + \rho  v_1+ \dots +a^{p-2}v_{p-1} +a^{p-1} v_{p-1} \dots + a^{2p-2}v_{2p-1}= 0\cr
\vdots \cr
 a^{2p-1} v_0  + a^{2p-2}  v_1+ \dots +a^{p-1} v_{p-1} +a^{p} v_{p-1} \dots + \rho v_{2p-1}= 0\cr
\end{cases},$$
or equivalently $M\tilde{V}=0$, with
$$ 
M  =\begin{pmatrix}
\rho+ a^{2p-1} & a+a^{2p-2}& \dots & a^{p-1}+a^{p}\\
a + a^{2p-2} &  \rho+a^{2p-3}& \dots & a^{p-2}+a^{p-1}\\
\vdots &\vdots & \dots &\vdots \\
a^{p-1}+a^{p}  &  a^{p-2}+a^{p-1} & \dots & \rho + a
\end{pmatrix} \quad  \text{ and } \quad
 \tilde{V}  =\begin{pmatrix}
v_{0}\\
v_{1}\\
\vdots \\
v_{p-1}
\end{pmatrix}
 $$
 on $\mathbb{C}^{p}$.
But $M=K_{1}^{\rho} (\tilde{S})+N$, with 
$$ \tilde{S} = \begin{pmatrix}
            0 & a & 0 & \cdots  & 0 \\
            0 & 0 & a & \cdots  & 0 \\
            \vdots & \vdots& \ddots & \ddots & \vdots \\
            0 & 0 & 0 & \ddots  & a \\
            0 & 0 & 0 & \cdots  & 0
        \end{pmatrix}\quad \text{ and } \quad  N  =\begin{pmatrix}
 a^{2p-1} & a^{2p-2}& \dots & a^{p}\\
 a^{2p-2} & a^{2p-3}& \dots & a^{p-1}\\
\vdots &\vdots & \dots &\vdots \\
a^{p}  &  a^{p-1} & \dots &  a
\end{pmatrix}.
 $$
 The matrix $N$ is a hermitian with rank one and $\sigma_p(N)=\{0, Tr(N)\}$ ($Tr(N)>0$), so $N$ is positive.
 Further, $w_\rho(\tilde{S})=w_\rho(a S_p)=\frac{w_\rho( S_p)}{w_\rho( S_{2p})}<1,$ then $K_{1}^{\rho} (\tilde{S})$ is positive and invertible. This implies that $M$ is also invertible. So, $v_0 =v_1= \dots =v_{p-1} =0$ and hence $v_{2p-1} =v_{2p-2}= \dots =v_{p} =0$. We conclude that $V=0$, a contradiction. We deduce that if $n+1$ is an even number then we have necessarily $\varepsilon=-1$. 
 
 Now, assume that $n=2p$, we have $v_{2p-k}=\varepsilon v_k$ for $k=0, \dots, p$. In this case, the system $(\mathcal{S}(1))$ is equivalent to
$$0=\sum_{k=0}^{p}v_kK_{1}^{\rho} (\tilde{S})e_k+\varepsilon  (U \otimes \tilde{U}) \tilde{V}, $$
with 
$$ 
\tilde{V}  =\begin{pmatrix}
v_{0}\\
v_{1}\\
\vdots \\
v_{p-1}\\
v_p
\end{pmatrix}, \quad  \text{ } \quad
 U  =\begin{pmatrix}
a^{p}\\
a^{p-1}\\
\vdots \\
a\\
1
\end{pmatrix}  \quad  \text{ and } \quad
 \tilde{U}  =\begin{pmatrix}
a^{p}\\
a^{p-1}\\
\vdots \\
a\\
0
\end{pmatrix}
 $$
 on $\mathbb{C}^{p+1}$, here $ \otimes$ is the tensor product (or the Kronecker
 product). Equivalently
 
 $$ \left( I+\varepsilon ( K_{1}^{\rho} (\tilde{S})^{-1}U \otimes \tilde{U})\right)  \tilde{V}=0.$$
Since $\tilde{V}\neq 0$, this implies that $det \left( I+\varepsilon ( K_{1}^{\rho} (\tilde{S})^{-1} U \otimes \tilde{U})\right)=0$. But the spectrum of $\left( I+\varepsilon ( K_{1}^{\rho} (\tilde{S})^{-1}U \otimes \tilde{U})\right)$ is $\{1,  1+\varepsilon \langle  K_{1}^{\rho} (\tilde{S})^{-1}U \mid \tilde{U} \rangle  \}$, then we obviously have
$$ \langle  K_{1}^{\rho} (\tilde{S})^{-1}U \mid \tilde{U} \rangle=- \varepsilon \in\{-1,1\}.$$
On the other hand, $\langle  K_{1}^{\rho} (\tilde{S})^{-1}U \mid \tilde{U} \rangle$ viewed as a function of $\rho$ is continuous, so it must be constant. Let $r$ such that $0<r<\dfrac{1}{\rho}$, then $\Vert\dfrac{1}{r} S^{2p}_{2p+1}\Vert=\dfrac{1}{r}> \rho $, so $\dfrac{1}{r} S^{2p}_{2p+1} \notin C_{\rho}(\mathbb{C}^{2p+1})$. By definition of the $\rho$-numerical radius it implies that $w_\rho( S^{2p}_{2p+1})> r$. From Corollary 5.10 of \cite{CaSu} we infer that $w_\rho( S_{2p+1})^{2p} \geq w_\rho( S^{2p}_{2p+1}) > r$. By letting $r$ goes to $\dfrac{1}{\rho}$, we get
$$a(\rho)= \dfrac{1}{w_\rho( S_{2p+1})}\leq \rho^{\frac{1}{2p}}.$$
Thus, for all $k=1,\ldots, p$, we have,
$$0\leq \dfrac{a(\rho)^k}{\rho}\leq \dfrac{1 }{\rho^{1-\frac{k}{2p}}}\underset{ \rho \to +\infty}{\longrightarrow} 0.$$ 
Consequently,  
$$\dfrac{\rho}{a^{2p}}\langle  K_{1}^{\rho} (\tilde{S})^{-1}U \mid \tilde{U} \rangle \underset{ \rho \to +\infty}{\longrightarrow} 1.$$
 As $\rho K_{1}^{\rho} (\tilde{S})^{-1} \longrightarrow I$, $\dfrac{1}{a^{p}}U\longrightarrow e_0$ and $\dfrac{1}{a^{p}}  \tilde{U}  \longrightarrow e_0$, by letting again $\rho$ to $+ \infty$. By what we assert that $\langle  K_{1}^{\rho} (\tilde{S})^{-1}U \mid \tilde{U} \rangle$ is strictly positive and constant for $\rho$ sufficiently large, so equal $1$. We conclude that  $\varepsilon =-1$.
\end{proof}

A crucial point is now to study the nullity of the coefficients $v_k$. The following result shows that the situation differs according to the parity of the dimension.
\begin{theorem}\label{vk2}
	Let  $T$ is Harnack equivalent to $S$  in $C_{\rho}(\mathbb{C}^{n+1})$, $\rho>1$, we have
	\begin{enumerate}
		\item  If the dimension is an even number, i.e. $n+1=2p$, then $$\mathcal{N} (K_{z}^{\rho} (T))=\mathbb{C}(v_0,zv_1, \dots,z^{p-1}v_{p-1}, -z^{p}v_{p-1}, \dots,-z^{2p-1} v_0),\quad  \text{ for all }z\in \mathbb{T},$$
		with $v_k\neq 0$, for all $k=0,1,\ldots,p-1$.
		\item  If the dimension is an odd number, i.e. $n+1=2p+1$,   then $$\mathcal{N}  (K_{z}^{\rho} (T))=\mathbb{C}(v_0,zv_1, \dots,z^{p-1}v_{p-1},0, -z^{p}v_{p-1}, \dots,-z^{2p} v_0),\quad  \text{ for all }z\in \mathbb{T},$$
		with $v_k\neq 0$, for all $k=0,1,\ldots,p-1$.
	\end{enumerate}
\end{theorem}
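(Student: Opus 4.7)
First, the odd-dimensional case contains one claim that is essentially free: applying the symmetry $v_k = -v_{n-k}$ from Theorem~\ref{shift} at the middle index $k = p$ (with $n = 2p$) gives $v_p = -v_p$, which forces $v_p = 0$. The substantive content of the theorem in both parities is therefore the non-vanishing of $v_0, \dots, v_{p-1}$, and that is where my effort will go.

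The plan is to re-execute, now with $\varepsilon = -1$, the reduction used at the end of the proof of Theorem~\ref{shift}. Substituting $v_{n-k} = -v_k$ (and, in the odd case, also $v_p = 0$) into the system $\mathcal{S}(1)$ collapses it to a symmetric $p \times p$ linear system $M \tilde V = 0$ in the unknown $\tilde V = (v_0, \dots, v_{p-1})^T$. Tracking the coefficients exactly as in that proof yields a decomposition
\[
M = K_1^\rho(\tilde S) - N,
\]
where $\tilde S = a S_p$ is the smaller $w_\rho$-normalized truncated shift on $\mathbb{C}^p$ and $N = c\,U \otimes U$ is a positive semidefinite rank-one matrix with $c>0$ and $U = (a^p, a^{p-1}, \dots, a)^T$ a truncated geometric progression. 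Since $w_\rho(\tilde S) < 1$, the matrix $K_1^\rho(\tilde S)$ is positive definite and invertible, so $M \tilde V = 0$ rewrites as $K_1^\rho(\tilde S)\tilde V = c\langle \tilde V, U\rangle U$; hence every non-zero element of $\ker M$ is a non-zero scalar multiple of $K_1^\rho(\tilde S)^{-1} U$. Because Theorem~\ref{shift} already guarantees $v_0 \ne 0$, the vector $\tilde V$ is non-zero, and the theorem reduces to the claim that every coordinate of $K_1^\rho(\tilde S)^{-1} U$ is non-zero.

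The main obstacle, and the bulk of the technical work, is this coordinate-wise non-vanishing. My approach would be to use the factorization
\[
K_1^\rho(\tilde S)^{-1} = (I - \tilde S)\, L_1^\rho(\tilde S)^{-1}\, (I - \tilde S^*),
\]
inherited from the identity $K_z^\rho(\tilde S) = (I - z\tilde S^*)^{-1} L_z^\rho(\tilde S) (I - \bar z \tilde S)^{-1}$, and to apply the three factors to $U$ in turn. The middle factor $L_1^\rho(\tilde S)^{-1}$ has strictly positive entries: $L_1^\rho(\tilde S)$ is tridiagonal with positive diagonal (as a positive definite matrix) and strictly negative off-diagonal $(1-\rho)a$ (since $\rho > 1$), and it is irreducible, so it is an $M$-matrix. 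The vector $(I - \tilde S^*)U$ is read off coordinate-by-coordinate from the identity $U_k - a U_{k-1} = a^{p-k}(1 - a^2)$, whose sign pattern is transparent given $a > 1$ when $\rho > 1$. Multiplying by the entry-wise positive matrix $L_1^\rho(\tilde S)^{-1}$ and then by the bidiagonal $(I - \tilde S)$ yields explicit expressions whose signs should be tractable in a generic range of $\rho$. For those possibly exceptional values of $\rho$ where a pure sign argument becomes inconclusive, I would combine analyticity of the coordinates in $\rho$ with the asymptotic identities $\rho K_1^\rho(\tilde S)^{-1} \to I$ and $a^{-p} U \to e_0$ as $\rho \to \infty$ (used at the end of Theorem~\ref{shift}) together with the bound $a(\rho) \le \rho^{1/(2p)}$ derived there to rule out isolated zeros. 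Carrying out this combined sign-plus-continuity argument is, in my view, the delicate part of the proof.
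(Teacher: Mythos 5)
Your reductions are correct and in substance coincide with the first part of the paper's argument: the observation $v_p=-v_p=0$ in the odd case is exactly the paper's, and substituting $v_{n-k}=-v_k$ into $\mathcal{S}(1)$ yields precisely the paper's identity \eqref{dependent}, $0=\sum_{k=0}^{p-1}v_kK_{1}^{\rho}(\tilde{S})e_k-a\langle\tilde{V}\mid U\rangle U$, from which $\tilde V$ is a non-zero multiple of $K_{1}^{\rho}(\tilde{S})^{-1}U$ and the theorem reduces to the coordinatewise non-vanishing of that vector (equivalently, to $\tilde D_m(a)\neq 0$ for $1\le m\le p-1$ in the paper's notation). Your use of the Stieltjes/$M$-matrix structure of $L_{1}^{\rho}(\tilde S)$ to get entrywise positivity of its inverse is also fine.

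The gap is in the main step, which you leave unexecuted. First, the sign argument cannot close: $(I-\tilde S^{*})U$ has zeroth coordinate $a^{p}>0$ and coordinates $a^{p-k}(1-a^{2})<0$ for $k\ge 1$, so applying the entrywise-positive $L_{1}^{\rho}(\tilde S)^{-1}$ to a vector of mixed signs gives no sign control, and the further multiplication by $I-\tilde S$ does not help. This is not a repairable technicality: for $1<\rho<\rho_0$ the recurrence \eqref{equ:crac} has complex roots (case $\Delta<0$ of \eqref{discriminant}), so the coordinates of $K_{1}^{\rho}(\tilde S)^{-1}U$ behave like $|\lambda|^{m}\cos(m\omega+\phi)$ and genuinely oscillate in sign; no positivity argument can work there. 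Second, the proposed fallback is logically invalid: real-analyticity in $\rho$ together with non-vanishing asymptotics as $\rho\to+\infty$ only prevents the zero set from accumulating, it does not exclude isolated zeros at particular finite values of $\rho$, and the theorem must hold for every $\rho>1$. Ruling out such exceptional $\rho$ is exactly where the paper does its work: it encodes the coordinates as the minors $\tilde D_m(a)$ satisfying $\tilde D_m=\alpha\tilde D_{m-1}-\beta\tilde D_{m-2}$, solves the recurrence explicitly in the three cases $\Delta=0$, $\Delta>0$, $\Delta<0$, and in each case plays the hypothetical equation $\tilde D_l(a)=0$ against the constraint $\mathbf{D}_n(a)=0$ (which encodes $a=w_\rho(S_{n+1})^{-1}$); in the hardest case this forces $\omega=\pi/(n+2l)$ and $\omega=\pi/(3n-2l)$ simultaneously, hence $l=n/2$, a contradiction. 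Nothing in your proposal substitutes for that analysis, so the proof is incomplete at precisely the point you yourself flag as delicate.
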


 \begin{proof}By Theorem \ref{shift}, we have  $v_k=-v_{n-k}$, for all $k=0,1,\ldots,p-1$ and $v_0\neq 0$. Let us remark that if $n=2p$, then  $v_p=-v_{n-p}$, and in this case $v_p =0$. Now, to prove the theorem,  it suffices to show
that $v_k\neq 0$ for all $k=1,\ldots p-1$. Assume that there exists $m\in \{1,\ldots, p-1 \}$ such that $v_m =0$.

\textbf{Claim 1.} \textit{ We have} 
$$ det \left( K_{1}^{\rho} (\tilde{S})e_0, \ldots, K_{1}^{\rho} (\tilde{S})e_{m-1},U, K_{1}^{\rho} (\tilde{S})e_{m+1},\ldots, K_{1}^{\rho} (\tilde{S})e_{p-1} \right) =0,$$
\textit{where} $U=(a^{p-1},\ldots,a,1)^t$.

Suppose that $n=2p-1$, then we have
$$
0=\sum_{k=0}^{n}v_kK_{1}^{\rho} (S)e_k=\sum_{k=0}^{p-1}v_kK_{1}^{\rho} (S)e_k
-\sum_{k=p}^{2p-1}v_{2p-1-k}K_{1}^{\rho} (S)e_k. 
$$
Let $P$ be the orthogonal projection on the subspace spanned by $e_0, \cdots, e_{p-1}$.
Thus, we have
$$
0=\sum_{k=0}^{p-1}v_kPK_{1}^{\rho} (S)Pe_k
-\sum_{k=0}^{p-1}v_{k}PK_{1}^{\rho} (S)e_{2p-1-k}
=\sum_{k=0}^{p-1}v_kK_{1}^{\rho} (\tilde{S})e_k
-\sum_{k=0}^{p-1}v_{k}PK_{1}^{\rho} (S)We_{k}. 
$$
Taking into account that 
$$
\langle K_{1}^{\rho} (S)e_i\mid e_j \rangle
=\delta_{i,j}\rho+(1-\delta_{i,j})a^{\vert i-j\vert} \quad \text{ and } \quad WK_{1}^{\rho} (S)W=K_{1}^{\rho} (S),
$$
we obtain
\begin{equation}\label{dependent}
0= \sum_{k=0}^{p-1}v_kK_{1}^{\rho} (\tilde{S})e_k-a\langle\tilde{V} \mid U\rangle U.
\end{equation}
In a similar way, we can see that \eqref{dependent} remains valid when $n=2p$.
Then the family $\{K_{1}^{\rho} (\tilde{S})e_k,  k\in \{1,\ldots, p-1 \}\setminus \{m\}\} \cup \{U\}$ is linearly dependent, and hence Claim 1. follows.

\textbf{Claim 2.} \textit{For} $m \geq 2$\textit{, set}
$$ \tilde{D}_m(a)= \left|
\begin{array}{ccccc}
\rho & a & \cdots  & a^{m-1}& a^{m}\\
a & \rho & \ddots & \vdots & \vdots  \\
\vdots & \vdots & \ddots& a & a^{2}   \\
a^{m-1}& a^{m-2} & \cdots & \rho &a  \\
a^{m} &  a^{m-1} & \cdots & a & 1 
\end{array} \right|.
$$
\textit{Then we have} 
$$\tilde{D}_{m}(a)=\alpha \tilde{D}_{m-1}(a)-\beta \tilde{D}_{m-2}(a),$$ \textit{where} $\alpha= \rho+(\rho-2)a^2$, $\beta= a^2 (1-\rho)^2$, $\tilde{D}_{0}(a)=1$ and $\tilde{D}_{1}(a)=\rho-a^2$,
\textit{and the associated discriminant} $\Delta$ \textit{is given by} 
$$
\Delta=\left( a^2-1\right) ((a+1)\rho -2a )((a-1)\rho -2a ) .
$$

From Claim 1., we derive that
$$ D_p(a)=\left|
\begin{array}{cccccccc}
 \rho & a & \cdots  & a^{m-1}& a^{p-1}&  a^{m+1}&\cdots &  a^{p-1} \\
            a & \rho & \ddots & \vdots & \vdots & \vdots & \vdots &\vdots \\
            \vdots & \vdots & \ddots& a & a^{p-m+1} & a^3& \cdots & \vdots  \\
            a^{m-1}& a^{m-2} & \cdots & \rho &a^{p-m} &a^2 &\cdots & a^{p-l}  \\
              a^{m} &  a^{m-1} & \cdots & a & a^{p-m-1} & a & \ddots& \vdots \\
              a^{m+1} & a^{m} & \cdots & a^2& a^{p-m-2}&\rho & \ddots &\vdots \\
               \vdots &\vdots & \vdots & \vdots&\ddots & \vdots& \ddots & a \\
            a^{p-1} & a^{p-2} & \cdots & a^{p-m}& 1& a^{p-m-2}  &\cdots& \rho
\end{array} \right|=0.$$
 Denote by $C_k$ the $k$-th column, with $k\in \{m+1,\dots, p-1\}$. By replacing the  $k$-th column by $a^{p-1-k}C_k -C_m$ for $k=m+1,\dots,p-1$, we get
 $$D_p(a)=(\rho-1)^{p-1-m}a^{\frac{(p-2-m)(p-1-m)}{2}} a^{(m-p-1)}\tilde{D}_m(a)=(\rho-1)^{p-1-m}a^{\frac{(p-m)(p-m-1)}{2}} \tilde{D}_m(a).$$
Since, $\rho\neq 1$, then $D_p(a)=0$ if and only if $\tilde{D}_m(a)=0$. Now,
Multiplying the second column  of the  determinant $\tilde{D}_m(a)$ by $a$ and subtracting it from the first one gives
$$ \tilde{D}_m(a)= \left|
\begin{array}{ccccc}
\rho-a^2 & a & \cdots  & a^{m-1}& a^m\\
            a (1-\rho) & \rho & \ddots & \vdots & \vdots  \\
           \vdots  & \vdots & \ddots& a & a^2  \\
           0 & a^{m-2} & \cdots & \rho & a  \\
              0 &  a^{m-1} & \cdots & a & 1
\end{array} \right|.
 $$ By a similar operation with the rows, we find
$$ \tilde{D}_m(a)= \left|
\begin{array}{cccccc}
\rho-a^2- a^2(1-\rho)& a (1-\rho)& 0&\cdots  & 0& 0\\
a (1-\rho) & \rho & a &\cdots & a^{m-2} & a^{m-1}  \\
0  &  a & \rho  &\ddots& \vdots &  \vdots \\
\vdots  & \vdots &\ddots &\ddots& a & a^2  \\
0 & a^{m-2} & \cdots  &a & \rho & a  \\
0 &  a^{m-1} &\cdots  &a^2 & a & 1
\end{array} \right|.$$
Which implies that for $m\geq 2$, we have 
$$\tilde{D}_{m}(a)=\alpha \tilde{D}_{m-1}(a)-\beta \tilde{D}_{m-2}(a),$$ with $\alpha= \rho+(\rho-2)a^2$, $\beta= a^2 (1-\rho)^2$, $\tilde{D}_{0}(a)=1$ and $\tilde{D}_{1}(a)=\rho-a^2$. This recurrence relation form the equation 
\begin{equation}\label{equ:crac}
r^{2}-\alpha r+\beta=0,
\end{equation}
with discriminant
\begin{equation}\label{discriminant}
\Delta=\left( a^2-1\right) ((a+1)\rho -2a )((a-1)\rho -2a ) .
\end{equation} 
This ends proof of Claim 2..

Observe that $1<a=w_\rho(S_{n+1})^{-1}\leq \rho $ because the leading principal minor of order $2$ of the matrix associated with $K_{1}^{\rho}(S)$ is nonnegative. Thus we have $(a+1)\rho -2a >0$   and we can see that $\Delta$ is the same sign as $(a-1)\rho -2a$. Let  $\varphi(\rho)=\rho-2-2(a(\rho)-1)^{-1}$ where $a(\rho)= w_\rho(S_{n+1})^{-1}$. Clearly, $a(\rho)\longrightarrow 1^+$  if  $\rho \longrightarrow 1^+ $ and  $a(\rho)\longrightarrow +\infty$  if  $\rho \longrightarrow +\infty$. This implies that $\varphi(\rho)\longrightarrow -\infty$  if  $\rho \longrightarrow 1^+ $ and  $\varphi(\rho)\longrightarrow +\infty$  if  $\rho \longrightarrow +\infty$. Further, the function $\rho\longrightarrow a(\rho)$ is strictly decreasing on $\left[1 ,    +\infty\right)$. We conclude that  $\rho\longrightarrow \varphi(\rho)$ is strictly increasing on $\left[1 ,    +\infty\right)$. 
Consequently, there exists a unique $\rho_0=2+2(a(\rho_0)-1)^{-1}>2$ such that $\varphi(\rho_0)=0$. 
Therefore, there are three possibilities for the sign of $\Delta$. 

Now, using Claim 2., we distinguish three cases to end the proof of Theorem \ref{vk2}. 

\textbf{Case 1.} $\Delta=0$ if and only if  $ \rho =\dfrac{2a_0}{a_0-1}$. Equivalently,  $\rho_0=2+\dfrac{2}{a_0-1} $.

 For this value of $\rho$, the discriminant $\Delta$ admits a  root
$$ \lambda=\dfrac{\alpha}{2}=\dfrac{a_0(1+a_0)}{a_0-1}$$
with $a_0=a(\rho_0)=\dfrac{\rho_0}{\rho_0-2}$. So that
$$\tilde{D}_{m}(a)=(A+Bm) \lambda^{m},$$
where the constants $A$, $B$ can be determined from the ``initial conditions``
$$\tilde{D}_{0}(a)=1=A$$ and 
$$\tilde{D}_{1}(a)=\rho-a^2 =(1+B)\lambda.$$
This yields that
 $$A=1, \qquad  B = 1-a_0,$$
which gives,
$$ \tilde{D}_{m}(a_0)=(1+(1-a_0)m) \lambda^{m}.$$
On the other hand, for $\rho > 1$ and $n\geq 2$, set 
$$\mathbf{D}_n(a)= \left|
\begin{array}{ccccc}
\rho & a & \cdots  & a^{n-1}& a^{n}\\
a & \rho & \ddots & \vdots & \vdots  \\
\vdots & \vdots & \ddots& a & a^{2}   \\
a^{n-1}& a^{n-2} & \cdots & \rho &a  \\
a^{n} &  a^{n-1} & \cdots & a &  \rho 
\end{array} \right|.
$$
We now perform the same operation as before and get
$$\mathbf{D}_{n}(a)=\alpha \mathbf{D}_{n-1}(a)-\beta \mathbf{D}_{n-2}(a),$$ with  the ``initial conditions`` $\mathbf{D}_{0}(a)=\rho$ and $\mathbf{D}_{1}(a)=\rho^2-a^2$. So we can see that  this is the same recurrence relation for $\tilde{D}_m$. Hence for same values of $a_0, \rho_0$ and $\lambda$,  respectively, we have
$$\mathbf{D}_{n}(a)=(t+s n) \lambda^{n},$$
where the constants $t$, $s$ can be determined from the ``initial conditions``
$\mathbf{D}_{0}(a)=\rho_0=t$ and 
$\mathbf{D}_{1}(a_0)=\rho_0^2-a_0^2 =(t+s)\lambda$.
This yields that
$t=\rho_0$ and  $s= -a_0$,
which gives,
$$\mathbf{D}_{n}(a_0)=(\rho_0-a_0n) \lambda^{n}  .$$
If $n=2p-1$, we have $\mathbf{D}_{2p-1}(a_0)=0$, implies that $\rho_0 =2p+1$. But $\tilde{D}_{l}(a)=0$, for some $l\in \{1,\ldots, p-1 \}$,  is equivalent to
 $$l=\dfrac{1}{a_0-1}=\dfrac{\rho_0-2}{2}=p-\dfrac{1}{2} $$
 which is  a contradiction. Similarly, if $n=2p$, we have $\mathbf{D}_{2p}(a_0)=0$, implies that $\rho_0 =2pa_0$ and $a_0=1+\dfrac{1}{p}$. But in this case, $\tilde{D}_{l}(a_0)=0$, for some $l\in \{1,\ldots, p-1 \}$,  is equivalent to
 $$l=\dfrac{1}{a_0-1}=p, $$
 which is again a contradiction because $l\leq p-1$.
  We conclude that $v_k\neq 0$, for all $k=1,\ldots, p-1$. 
  
  \textbf{Case 2.} $\Delta>0$; in this case  $\rho>\rho_0>2$ ($\varphi$ is strictly increasing). So \eqref{equ:crac} admits two real roots $0<\lambda_1<\lambda_2$. 
  First, we claim that $0<\lambda_1 <1<\lambda_2$. Indeed, we have $\lambda_2>\dfrac{\rho}{2}>1$. If we put $\psi(r)=r^2-\alpha r+\beta$, we can see that $\psi(0)=\beta>0$ and $\psi(1)=-(1+a^2)<0$, so $0<\lambda_1<1$, since $\psi(\lambda)=0$.
Now, by  a similar operation on $\tilde{D}_{m}(a)$ as before, but this time with the last rows and the last line, we  find  
\begin{equation}\label{equ:key}
\tilde{D}_{m}(a)= [a^2(\rho -2)+1] \mathbf{D}_{m-1}(a)-a^2(\rho -1)^2\mathbf{D}_{m-2}(a).
\end{equation}
  Thus, if $\tilde{D}_{l}(a)=0$, for $l\in \{1,\ldots, p-1 \}$, then
  $$\mathbf{D}_{l}(a)=(\rho -1)\mathbf{D}_{l-1}(a).$$
  On the other hand,  we have
  $$\mathbf{D}_{m}(a)=t\lambda_1^{m}+s\lambda_2^{m},$$
  where the constants $t$, $s$  are determined by
  $$\mathbf{D}_{0}(a)=\rho=t+s \qquad \text{ and } \qquad \mathbf{D}_{n}(a)=t\lambda_1^{n}+s\lambda_2^{n}=0.$$
  This yields that
  $$t= \rho \dfrac{ \lambda_{2}^n }{\lambda_{2}^n-\lambda_{1}^n}, \qquad \text{ and } \qquad s= -\rho \dfrac{\lambda_{1}^n }{\lambda_{2}^n-\lambda_{1}^n},$$ 
  which gives,
  \begin{equation}\label{equ:key2}
  \mathbf{D}_{m}(a)=  \dfrac{ \rho }{\lambda_{2}^n-\lambda_{1}^n}(\lambda_{2}^n\lambda_1^{m}-\lambda_{1}^n\lambda_2^{m}).
  \end{equation}
  This implies that $$\mathbf{D}_{m+1}(a)-\mathbf{D}_{m}(a)=\dfrac{ \rho }{\lambda_{2}^n-\lambda_{1}^n}[\lambda_{2}^n\lambda_1^{m}(\lambda_1 -1)+\lambda_{1}^n\lambda_2^{m}(1-\lambda_2)].$$
  Since $\lambda_1 <1<\lambda_2$, then $ \mathbf{D}_{m+1}(a)<\mathbf{D}_{m}(a)$, for all $m=0,\ldots, p-1$. But we have
  $\mathbf{D}_{l}(a)=(\rho -1)\mathbf{D}_{l-1}(a)> \mathbf{D}_{l-1}(a)$ which leads to  a contradiction.  We also conclude that $v_k\neq 0$, for all $k=1,\ldots, p-1$.
  
  \textbf{Case 3.} $\Delta<0$; in this case $1<\rho <\rho_0$. So \eqref{equ:crac} admits two complex roots, $\lambda$ and its conjugate $\overline{\lambda}$. Let $\lambda =\lvert\lambda \rvert e^{i\omega}$ the root which lies in the upper half-plane ($0<\omega<\pi$). Clearly $\lvert\lambda \rvert=\beta=a(\rho-1)$, so $\lambda =a(\rho-1) e^{i\omega}$. Since
  $ \tilde{D}_{m}(a)=A \lambda^{m}+\overline{A} \,\, \overline{\lambda}^{m},$
  with 
  $1 =A+\overline{A}$  and $ \rho-a^2=A \lambda+\overline{A} \,\, \overline{\lambda}$.
 It follows that
   $$\tilde{D}_{m}(a)=\dfrac{  \overline{\lambda}-(\rho-a^2) }{\overline{\lambda} -\lambda }\lambda^{m}-\dfrac{  \lambda -(\rho-a^2)}{\overline{\lambda} -\lambda} \overline{\lambda}^{m}.$$
  Similarly; for $\mathbf{D}_{n}(a)$, we have
   $$ \mathbf{D}_{n}(a)=t \lambda^{n}+\overline{t}\, \, \overline{\lambda}^{n},$$
  such that 
  $$\mathbf{D}_{0}(a)=\rho =t+\overline{t}, \quad \mathbf{D}_{1}(a)=\rho^2-a^2=t \lambda_1+ \overline{t} \, \, \overline{\lambda}.$$
  which gives,
   $$\mathbf{D}_{n}(a)=\dfrac{ \rho \overline{\lambda}-(\rho^2-a^2) }{\overline{\lambda} -\lambda}\lambda^{n}-\dfrac{ \rho \lambda-(\rho^2-a^2)}{\overline{\lambda} -\lambda} \overline{\lambda}^{n}.$$
  Since $\mathbf{D}_{n}(a)=0$, we obtain that 
  $$ \left( \dfrac{\lambda }{\overline{\lambda}} \right)^{n}=  \dfrac{\rho \lambda-(\rho^2-a^2) }{\rho \overline{\lambda}-(\rho^2-a^2)}.$$
  Also
  $$ e^{i2n\omega}=  \dfrac{\rho \lambda-(\rho^2-a^2) }{\rho \overline{\lambda}-(\rho^2-a^2)}.$$
   It follows that
   $$ e^{i2n\omega}=  \dfrac{(\rho \lambda-(\rho^2-a^2))^2 }{\lvert\rho \overline{\lambda}-(\rho^2-a^2)\rvert^2}.$$
   
    Also, notice that \eqref{equ:key2} remains valid in this case, thus we have
    $$\mathbf{D}_{k}(a)=\rho a^k (\rho-1)^k\dfrac{\sin (n-k)\omega}{\sin n\omega}.$$
    Since $\omega \in ]0, \pi[$ and $\mathbf{D}_{k}(a)>0$ for 
    $k \in \left\lbrace 0,\cdots, n-1\right\rbrace $, we have necessarily 
    \begin{equation}\label{omega}
    \omega \in ]0, \frac{\pi}{n} [.
    \end{equation}
   Taking in account  that 
  \begin{equation}\label{cos_omega}
    \cos \omega=\dfrac{\alpha }{2\lvert\lambda \rvert}=\dfrac{\rho+(\rho-2)a^2  }{2a(\rho-1)} ,
    \end{equation} 
by a straightforward calculation, we obtain that $\lvert\rho \overline{\lambda}-(\rho^2-a^2)\rvert^2= a^2(\rho-1)(a^2-1)$. Then using \eqref{omega} we can deduce that
   \begin{equation}\label{equ:key_sin_1}
    \sin n\omega = \dfrac{\rho}{a}\sin \omega.
    \end{equation}
    
   Now,  if $\tilde{D}_{l}(a)=0$, for $l\in \{1,\ldots, p-1 \}$, $$ \left( \dfrac{\lambda }{\overline{\lambda}} \right)^{l}=  \dfrac{ \lambda-(\rho-a^2) }{ \overline{\lambda}-(\rho-a^2)}.$$
   Also
   $$ e^{i2l\omega}=  \dfrac{ \lambda-(\rho-a^2) }{ \overline{\lambda}-(\rho-a^2)}.$$
   It follows that
   $$ e^{i2l\omega}=  \dfrac{( \lambda-(\rho-a^2))^2 }{\lvert \overline{\lambda}-(\rho-a^2)\rvert^2}.$$
   Similarly as above, by a straightforward calculation and \eqref{omega}, we obtain that 
   $$ \sin l\omega =  \dfrac{\sqrt{\rho-1}}{\sqrt{a^2-1}}\sin \omega$$
   and
   $$ \cos l\omega =  \dfrac{\rho \sqrt{a^2-1}}{2a\sqrt{\rho-1}}.$$
   This two relations gives,
  \begin{equation}\label{equ:key_sin_2}
   \sin 2l\omega =  \dfrac{\rho}{a}\sin \omega.
  \end{equation}
    Let us remark that the relation \eqref{equ:key}, is also true in this case, by what it follows that
   $$\mathbf{D}_{l}(a)=(\rho -1)\mathbf{D}_{l-1}(a).$$
Therefore, we have
     \begin{equation}\label{equ:key_sin_3} a\sin (n-l)\omega =\sin (n-l+1)\omega
      \end{equation}
    Combines \eqref{equ:key_sin_1} and \eqref{equ:key_sin_2}, we derive that
    \begin{equation}\label{equ:key5}
    \sin n\omega = \sin (2l\omega),
    \end{equation}
    for some $l=1,\ldots, p-1$. Thus 
     $$\begin{cases} (n-2l)\omega= 2k\pi \text{ where } k \in \mathbb{Z}\cr
     \text{ or }\cr
    (n+2l)\omega= (2k'+1)\pi \text{ where } k'\in \mathbb{Z}.
    \end{cases} $$
    By \eqref{omega} we see that the first equation gives $2k\pi \in ]0, \pi[$ which is impossible. Again, by \eqref{omega} we observe that $ (2k'+1)\pi\in ]0, 2\pi[$, and obviously we have $k'=0$, then 
\begin{equation}\label{omega1}
    \omega = \frac{\pi}{n+2l}.
    \end{equation}
 
 For convenience, we set $q=n-l$ and using \eqref{cos_omega} and \eqref{equ:key_sin_3} we sucessively get
\begin{align*}
 &2 \frac{\sin(q+1)\omega}{\sin(q\omega)}(\rho-1)\cos\omega
 =\rho+(\rho-2)\frac{\sin^2 (q+1)\omega}{\sin^2 q\omega}\\
&\Leftrightarrow  2(\rho -1)\sin(q+1)\omega \left(\sin(q\omega)\cos \omega \right)
=\rho \sin^2 q\omega+(\rho-2)\sin^2 (q+1)\omega \\ 
&\Leftrightarrow (\rho-1)\sin^2 (q+1)\omega  + (\rho-1)\sin(q-1)\omega \sin(q+1)\omega\\  
&=\rho \sin^2 q\omega+(\rho-2)\sin^2 (q+1)\omega \\ 
&\Leftrightarrow \sin^2 (q+1)\omega+ (\rho-1) \sin(q+1)\omega \sin(q-1)\omega
=\rho \sin^2 q\omega \\
&\Leftrightarrow \cos (2q\omega)-\cos(2(q+1)\omega)=(\rho-1)\left[ 1-2\cos(2\omega)\right].
\end{align*}   
It gives
\begin{equation}\label{key_sin_4}
    \sin((2q+1)\omega)=(\rho-1)\sin \omega.
\end{equation}  
Now, combining \eqref{equ:key_sin_1}, \eqref{equ:key_sin_3} and \eqref{key_sin_4} we sucessively obtain   
\begin{align*}
 &\sin n\omega \sin(q+1)\omega=\sin q\omega \left[\sin \omega + \sin(2q+1)\omega \right]\\ 
&\Leftrightarrow \sin n\omega \sin(q+1)\omega=
\sin(q\omega)\left[2\sin(q+1)\omega \cos(q\omega) \right] \\
&\Leftrightarrow \sin n\omega \sin(q+1)\omega=\sin 2q\omega\sin(q+1)\omega.
\end{align*}      
Having in view \eqref{omega} we see that $\sin(q+1)\omega \neq 0$, and hence $\sin(n\omega)=\sin(2q\omega)$. Finally, we get  
$$\begin{cases} n\omega= 2q\omega +2k\pi \text{ where } k \in \mathbb{Z}\cr
     \text{ or }\cr
    n\omega= \pi -2q\omega +2k'\pi \text{ where } k'\in \mathbb{Z}.
    \end{cases} $$   
From the first equation and \eqref{omega}, we deduce that $-2k\pi=(n-2l)\omega \in ]0,\pi[$ which is impossible. According to the second equation and \eqref{omega} we have $(2k'+1)\pi=(3n-2l)\omega \in ]0,3\pi[$ which forces $k'$ to be $0$, thus we have
\begin{equation}\label{omega2}
\omega=\dfrac{\pi}{3n-2l}.
\end{equation}
The equalities \eqref{omega1} and \eqref{omega2} give $l=n/2$ which is absurd because 
$l \in \left\lbrace 1, \cdots , p-1 \right\rbrace$ and $n=2p-1$ or $n=2p$. This achieved the proof of
Theorem \ref{vk2}. 
\end{proof}
\section{Some consequences}\label{sec:3}
On the basis of the study made in section 2, we give some results which are of own interest. The first one is related to the critical value of $\rho_0>1$  which annulates the crucial discriminant given in \eqref{discriminant}.
\begin{proposition}\label{rho0}
Let $n \geq 1$. According to the notations used in the proof of the Theorem \ref{vk2}, we have $\rho_0=n+2$ and
$w_{n+2}(S_{n+1})=\frac{n}{n+2}$. In particular, we have
$$
w_{m+1}(T)\leq \frac{m-1}{m+1} \parallel T\parallel
$$
for any operator $T$ acting on a Hilbert space and such that $T^m=0$ ($m \geq 2$).
\end{proposition}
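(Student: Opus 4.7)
\emph{Proof sketch.} I would prove the two assertions in sequence.

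\emph{The value of $\rho_0$ and of $w_{n+2}(S_{n+1})$.} The plan is to push through the closed-form computation already started in Case~1 of the proof of Theorem~\ref{vk2}. By the definition of $\rho_0$ one has $\varphi(\rho_0)=0$, hence $a_0:=a(\rho_0)=\rho_0/(\rho_0-2)$. With $\Delta(\rho_0)=0$, the characteristic equation \eqref{equ:crac} has a double root $\lambda=\alpha/2=a_0(1+a_0)/(a_0-1)$, and solving the recursion $\mathbf{D}_n=\alpha\mathbf{D}_{n-1}-\beta\mathbf{D}_{n-2}$ with initial values $\mathbf{D}_0(a_0)=\rho_0$ and $\mathbf{D}_1(a_0)=\rho_0^{2}-a_0^{2}$ gives
$$\mathbf{D}_n(a_0)=(\rho_0-a_0 n)\,\lambda^{n}.$$
On the other hand, the defining property $a_0=w_{\rho_0}(S_{n+1})^{-1}$ says that $S_{n+1}(a_0)$ lies on the boundary of $C_{\rho_0}(\mathbb{C}^{n+1})$. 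By the transfer argument used in the proof of Theorem~\ref{shift} (any null vector of $K_{z}^{\rho_0}(S_{n+1}(a_0))$ at some $z\in\mathbb{T}$ produces one at $z=1$), the matrix $K_1^{\rho_0}(S_{n+1}(a_0))$ has nontrivial kernel, so its determinant $\mathbf{D}_n(a_0)$ vanishes. Since $\lambda>0$ this forces $\rho_0=a_0 n$, and combined with $a_0=\rho_0/(\rho_0-2)$ we obtain $\rho_0-2=n$, i.e.\ $\rho_0=n+2$. Consequently $a_0=(n+2)/n$ and $w_{n+2}(S_{n+1})=1/a_0=n/(n+2)$.

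\emph{The operator inequality.} For the second assertion, the plan is to combine the equality just obtained with the extremality of the truncated shift among $m$-nilpotent contractions: for every Hilbert space operator $T$ with $T^{m}=0$ and every $\rho\geq 1$,
$$w_{\rho}(T)\leq\|T\|\,w_{\rho}(S_{m}(1)).$$
This is a standard consequence of the constrained polynomial von Neumann inequality $\|p(T)\|\leq\|p(S_m(1))\|$, valid for all polynomials $p$ whenever $\|T\|\leq 1$ and $T^m=0$, fed into the dilation characterization $T\in C_\rho\Longleftrightarrow\|p(T)+(\rho-1)p(0)I\|\leq\rho\|p\|_\infty$. Specialising to $\rho=m+1$ and applying the first part with $n=m-1$, so that $w_{m+1}(S_m(1))=(m-1)/(m+1)$, yields at once $w_{m+1}(T)\leq\frac{m-1}{m+1}\|T\|$.

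\emph{Where the real work lies.} Part~1 is essentially bookkeeping on top of Case~1 of the proof of Theorem~\ref{vk2}. The substantive step is the extremality invoked in Part~2; one expects this either to be quoted from a classical reference or to be justified by a short factorization/compression argument exhibiting $T$ as the image of $\|T\|\,S_{m}(1)\otimes I$ under a suitable completely contractive map.
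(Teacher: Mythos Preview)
Your proposal is correct and follows essentially the same approach as the paper. For the first part you reproduce the Case~1 computation $\mathbf{D}_n(a_0)=(\rho_0-a_0 n)\lambda^n$ and combine $\mathbf{D}_n(a_0)=0$ with $a_0=\rho_0/(\rho_0-2)$ exactly as the paper does; for the second part the paper simply cites Theorem~3.1 (or Corollary~4.1) of \cite{BaCa}, which is precisely the extremality statement $w_\rho(T)\le \lVert T\rVert\, w_\rho(S_m(1))$ for $T^m=0$ that you invoke and sketch.
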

\begin{proof}
	Given the calculations made to determine $\mathbf{D}_{k}(a)$ in the first case ($\Delta=0$) of the proof of Theorem \ref*{vk2}, we have $\mathbf{D}_{k}(a_0)=(\rho_0-a_0k)\lambda^k$. Since 
	$\mathbf{D}_{n}(a_0)=0$, we get $n=\rho_0/a_0$ which leads to 
	$$
	\rho_0=n+2 \text{ and } a_0=\frac{1}{w_{\rho_0}(S_{n+1})}=\frac{n+2}{n},
	$$
	which proves the first statements. Then, the inequality given for nilpotent operators follows from Theorem 3.1 (or Corollary 4.1) in \cite{BaCa}.
\end{proof}
 It is not quite so easy to calculate or estimate $w_\rho(S_{n+1})$ (see for instance \cite{Carrot}). 
The next result shows that it can be done in a simpler way if $1<\rho<n+2$.
\begin{proposition}\label{wrho}
	Let $\rho \in ]1,n+2[$, then $x:=w_\rho(S_{n+1})=1/a$ is the unique solution in $]1/\rho, 1[$ of the following system of two equations
	\begin{equation}\label{omega_wrho}
	\begin{cases}
\dfrac{\sin(n\omega)}{\sin(\omega)} &=\rho x \\
	\cos (\omega) &=\dfrac{\rho x^2+(\rho-2)}{2x(\rho-1)}
	\end{cases}
	\end{equation}
	where $\omega \in ]0,\dfrac{\pi}{n}[$ is an intermediate variable uniquely determined. Moreover, the function
	$\rho\longmapsto \omega(\rho)$ is continuous and strictly decreasing on $\left] 1, n+2\right[ $ and we have $\omega(\left] 1, n+2\right[)=\left] 0, \frac{\pi}{n+1}\right[$.
\end{proposition}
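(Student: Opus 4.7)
The plan is to read off the system directly from Case~3 of the proof of Theorem~\ref{vk2}. Under the hypothesis $\rho\in\,]1,n+2[\,=\,]1,\rho_0[$ we are automatically in the regime $\Delta<0$, and that case of the proof already exhibits the pair $(x,\omega):=(1/a(\rho),\arg\lambda)$, with $a(\rho)=w_\rho(S_{n+1})^{-1}$ and $\lambda=a(\rho)(\rho-1)e^{i\omega}$ the complex root of the characteristic equation \eqref{equ:crac}, as satisfying both $\cos\omega=(\rho+(\rho-2)a^2)/(2a(\rho-1))$ (this is \eqref{cos_omega}, which becomes the second equation of the system after substituting $x=1/a$) and $\sin(n\omega)/\sin\omega=\rho/a$ (this is \eqref{equ:key_sin_1}, which is the first). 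The bound $\omega\in\,]0,\pi/n[$ is exactly \eqref{omega}, and the inclusion $x\in\,]1/\rho,1[$ comes from the inequalities $1<a\leq\rho$ noted in the preamble to Case~1.

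For uniqueness I would first observe that equation~(2) determines $\cos\omega$ as a function of $x$, and since $\omega\in\,]0,\pi/n[\,\subset\,]0,\pi[$ lies in an injectivity range of cosine, $\omega$ is uniquely determined by $x$. To pin down $x$, write $x=f(\omega)/\rho$ from equation~(1), where $f(\omega):=\sin(n\omega)/\sin\omega$, and substitute into (2) to reduce to the single equation
\[
\bigl(f(\omega)-(\rho-1)\cos\omega\bigr)^{2}=1-(\rho-1)^{2}\sin^{2}\omega.
\]
Continuity from $\rho\to 1^{+}$, where the bracketed expression tends to $1>0$, selects the branch $f(\omega)=(\rho-1)\cos\omega+\sqrt{1-(\rho-1)^{2}\sin^{2}\omega}$; uniqueness of its root in $]0,\pi/n[$ is then to be deduced from the strict monotonicity of $f$ (decreasing from $n$ at $0^{+}$ to $0$ at $(\pi/n)^{-}$) together with a matching analysis of the right-hand side as a decreasing function of $\omega$ on the range where it is real.

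For continuity, monotonicity, and image, continuity of $\omega(\rho)$ follows from continuity of $\rho\mapsto w_\rho(S_{n+1})$ combined with the explicit formula in equation~(2). The boundary values come from reading (1) as $f(\omega(\rho))=\rho\,w_\rho(S_{n+1})$: as $\rho\to 1^{+}$, $w_\rho(S_{n+1})\to\|S_{n+1}\|=1$, so $f(\omega)\to 1$, whose unique solution in $]0,\pi/n[$ is $\omega=\pi/(n+1)$ (from $\sin(n\omega)=\sin\omega$ giving $(n+1)\omega=\pi$); as $\rho\to(n+2)^{-}$, Proposition~\ref{rho0} gives $\rho\,w_\rho(S_{n+1})\to n=\lim_{\omega\to 0^{+}}f(\omega)$, so $\omega\to 0$. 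Strict monotonicity of $\omega(\rho)$ then reduces, via equation~(1) and the strict decrease of $f$, to strict increase of $\rho\mapsto\rho\,w_\rho(S_{n+1})$ on $]1,n+2[$, which I would derive by implicit differentiation of the defining relation $\mathbf{D}_n(a(\rho))=0$. The main obstacle is this last step; once it is in place, the image equality $\omega(]1,n+2[)=\,]0,\pi/(n+1)[$ follows by continuity and the intermediate value theorem.
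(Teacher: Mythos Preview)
Your overall architecture matches the paper's: existence is read off from Case~3 of the proof of Theorem~\ref{vk2} (equations \eqref{cos_omega}, \eqref{equ:key_sin_1}, and \eqref{omega}); the boundary values of $\omega(\rho)$ come from $\rho\,w_\rho(S_{n+1})\to 1$ as $\rho\to 1^{+}$ and $\rho\,w_\rho(S_{n+1})\to n$ as $\rho\to (n+2)^{-}$ via Proposition~\ref{rho0}; and the monotonicity of $\omega(\rho)$ is reduced to the strict decrease of $f(\omega):=\sin(n\omega)/\sin\omega$ on $]0,\pi/n[$ (which the paper proves by the same derivative computation you indicate) together with the strict increase of $g(\rho):=\rho\,w_\rho(S_{n+1})$. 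The paper then simply writes $\omega=f^{-1}\circ g$ and reads off both the monotonicity and the image $]0,\pi/(n+1)[$.

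Where you diverge is exactly at your self-identified obstacle. The paper does \emph{not} attack the strict increase of $g$ by implicit differentiation of $\mathbf{D}_n(a(\rho))=0$; instead it invokes Theorem~4 of Ando--Nishio \cite{AnN}, which says that $s\mapsto (1+e^{s})\,w_{1+e^{s}}(T)$ is convex for any $T$. Combined with the limit $\lim_{s\to -\infty}(1+e^{s})\,w_{1+e^{s}}(S_{n+1})=\lVert S_{n+1}\rVert=1$, convexity forces $h(s):=g(1+e^s)$ to be nondecreasing; then if $g(\rho_1)=g(\rho_2)$ with $\rho_1<\rho_2$, $h$ would have to be constant on $]-\infty,\ln(\rho_2-1)]$, hence equal to $1$, contradicting $g(\rho)>1$ for $\rho>1$ (from $\mathbf{D}_1(a)=\rho^2-a^2>0$). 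This convexity shortcut is the one idea you are missing; your implicit-differentiation route may be workable, but it is considerably heavier and you have not carried it out.

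There is also a gap in your uniqueness argument. After reducing to $f(\omega)=(\rho-1)\cos\omega+\sqrt{1-(\rho-1)^{2}\sin^{2}\omega}$, you invoke that both sides are decreasing. Two decreasing functions can intersect more than once; you would need monotonicity of their \emph{difference}, or a convexity separation. Moreover the right-hand side ceases to be real once $(\rho-1)\sin\omega>1$, which does occur in $]0,\pi/n[$ for $\rho$ near $n+2$, and your branch selection ``by continuity from $\rho\to 1^{+}$'' presupposes the continuous dependence on $\rho$ that is part of what is to be proved. The paper handles uniqueness quite differently and much more briefly: it observes that for any candidate $x\in]1/\rho,1[$ equation~(2) pins down $\omega$ (cosine is injective on $]0,\pi[$), and then appeals to equation~(1) together with \eqref{equ:key_sin_1}.
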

\begin{proof}
The fact that $w_\rho(S_{n+1})$ is a solution of the system \eqref{omega_wrho} follows directly from the study made in the third case ($\Delta<0$) of the proof of Theorem \ref*{vk2}. Conversely, suppose that 
$x \in ]1/\rho, 1[$ is a solution, we easily see that we necessarily have 
$(\rho x^2+(\rho-2))/(2x(\rho-1))\in ]0,1[$, and then $\omega \in ]0,\pi[$ is uniquely determined by the second equation in \eqref{omega_wrho}, and in turn $x=w_\rho(S_{n+1})$ by the first equation
and \eqref{equ:key_sin_1}. Set $f(t)=\sin(nt)/\sin(t)$ for $t \in ]0,\pi/n[$, then we have $f'(t)=u(t)/(\sin^{2}(t))$ where
$u(t)=n\cos(nt)\sin(t)-\sin(nt)\cos(t)$. For any $t\in ]0,\pi/n[$, we see that $u'(t)=-(n^2-1)\sin(nt)\sin(t)<0$, thus the function $u$ is decreasing on $]0,\pi/n[$ and hence negative. Finally, we see that the function $f$ is strictly decreasing and continuous on $]0,\pi/n[$ and we have $f(\left] 0,\pi/n \right[ )=\left] 0,n \right[  $.
Now, we define the function $g$ on $\left[  1, +\infty \right[$ by setting $g(\rho)=\rho w_\rho(S_{n+1})$.
Suppose that there exist $\rho_1 $, $\rho_2$ in $\left]1,+\infty \right[ $ with $\rho_1<\rho_2$ and such that $g(\rho_1)=g(\rho_2)$, then using the facts that $s\mapsto h(s)=g(1+e^{s})$ is a convex function on the real line (see Theorem 4 of \cite{AnN}), and that $\lim_{s\rightarrow -\infty}h(s)=\lVert S_{n+1}\rVert=1$, we derive that $h$ is increasing on $\mathbb{R}$. Taking into account that $g(\rho_1)=g(\rho_2)$, we see that $h$  must be constant on $\left]-\infty, \ln(\rho_2-1) \right] $. Thus we should have $1=g(\rho_1)=g(\rho_2)$ which is impossible since $1<\rho w_\rho(S_{n+1})$ for any $\rho>1$ ($\rho^2-a^2=\mathbf{D}_{1}(a)>0$). Consequently, $g$ is a strictly increasing continuous function on $\left[  1, +\infty \right[$ with
$g(\left]1,n+2 \right[ )=\left]1,n \right[ $ ($ g(n+2)=n $ by Proposition \ref*{wrho} ).
In conclusion, $\omega=f^{(-1)}\circ g$ as a function of $\rho$ is strictly decreasing and continuous with $\omega(\left] 1, n+2\right[)=\left] 0, \frac{\pi}{n+1}\right[$ ($f^{(-1)}(1)=\pi/(n+1)$). This completes the proof.
\end{proof}
\begin{remark}
When $\rho=2$, Proposition \ref*{wrho} allows us to retrieve quickly the well known value of
$w_{2}(S_m)$ ($m \geq 2$). Set $n=m-1$, the second equation of \eqref{omega_wrho} gives
$x=\cos(\omega)$ and the first equation leads to $\sin(n\omega)=\sin(2\omega)$. Since $\omega \in ]0,\pi/n[$ the only possible answer is $\omega=\pi/(n+2)$ and hence
$$
w_{2}(S_m)=\frac{\pi}{m+1}.
$$
\end{remark}

\begin{proposition}\label{orbit} Let $\rho>0$  $(\rho\neq 1)$ and $U$ is a unitary matrix. Then, 
	$U^*SU$ is Harnack equivalent to $S$ in $ C_{\rho}(\mathbb{C}^{n+1})$ if and only if $Ue_k=\alpha e_k$, $\alpha \in \mathbb{T}$, for all $k$ such that $v_k\neq 0$. 
\end{proposition}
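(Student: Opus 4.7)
The plan is to re-encode the Harnack equivalence as a geometric condition: $U$ must preserve each null-space line of $K_z^\rho(S)$. Since $U$ is unitary, $U^\ast SU$ is unitarily similar to $S$, hence lies in $C_\rho(\mathbb{C}^{n+1})$ with $\sigma(U^\ast SU)=\{0\}$ disjoint from $\mathbb{T}$; moreover $K_z^\rho(U^\ast SU)=U^\ast K_z^\rho(S) U$, so $\mathcal{N}(K_z^\rho(U^\ast SU)) = U^\ast\mathcal{N}(K_z^\rho(S))$. Because $\dim\mathcal{N}(K_z^\rho(S))=1$ is constant on $\mathbb{T}$, \cite[Theorem 1.17]{CaBeBel2018} tells us that $U^\ast SU\stackrel{H}{\sim} S$ is equivalent to $U\mathcal{N}(K_z^\rho(S))=\mathcal{N}(K_z^\rho(S))$ for every $z\in\mathbb{T}$.

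By Theorems \ref{shift} and \ref{vk2}, $\mathcal{N}(K_z^\rho(S)) = \mathbb{C}V(z)$ with $V(z)=\sum_{k\in I_0}z^k v_k e_k$ and $I_0=\{k:v_k\neq 0\}$. So the condition becomes $UV(z)=\lambda(z)V(z)$ for some scalar $\lambda(z)$, and since $U$ is unitary while $\|V(z)\|^2=\sum_k|v_k|^2$ is constant on $\mathbb{T}$, one has $|\lambda(z)|=1$. Writing $D(z)=\operatorname{diag}(1,z,\dots,z^n)$ and $v=(v_0,\dots,v_n)^t$, the identity $UD(z)v=\lambda(z)D(z)v$ reads coordinate-wise
\[
\sum_{k\in I_0}u_{jk}v_k z^{k-j}=\lambda(z) v_j\qquad (j=0,\dots,n,\; z\in\mathbb{T}),
\]
which shows that $\lambda$ is a Laurent polynomial; the unimodularity $|\lambda|=1$ on $\mathbb{T}$ then forces $\lambda(z)=\alpha z^m$ for some $\alpha\in\mathbb{T}$ and $m\in\mathbb{Z}$ (expand $\lambda\bar\lambda=1$ and read off the extreme autocorrelation coefficient, which is the product of the top and bottom nonzero coefficients of $\lambda$). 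Matching powers of $z$ in the displayed identity then yields, for $j\in I_0$, $u_{j,j+m}v_{j+m}=\alpha v_j$ with all other $u_{jk}$, $k\in I_0$, vanishing; and for $j\notin I_0$, $u_{jk}=0$ for every $k\in I_0$. In particular $I_0+m\subseteq I_0$, and because $I_0$ is a finite nonempty subset of $\mathbb{Z}$ this forces $m=0$. Consequently $u_{jj}=\alpha$ for every $j\in I_0$ while every other entry in column $k\in I_0$ is zero, that is $Ue_k=\alpha e_k$ for all $k\in I_0$.

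Conversely, if $Ue_k=\alpha e_k$ for every $k\in I_0$, then for $z\in\mathbb{T}$ we have
\[
UV(z)=\sum_{k\in I_0}z^k v_k\,Ue_k=\alpha V(z),
\]
so $U$ preserves each line $\mathbb{C}V(z)$ and the first paragraph closes the argument. The main obstacle in this plan is the ``unimodular Laurent polynomials are monomials'' step combined with the combinatorial observation $I_0+m\subseteq I_0\Rightarrow m=0$; once these are in hand, the identification of $U$ on $\{e_k:k\in I_0\}$ reduces to a coefficient match, and the converse is a one-line computation.
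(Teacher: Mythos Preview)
Your proof is correct and reaches the same conclusion, but the route to showing that the eigenvalue function is constant differs from the paper's. The paper invokes \cite[Corollary 2.24]{CaBeBel2018} to reduce to the condition $UV(z)=\alpha(z)V(z)$, then observes that $\alpha(z)=\langle UV(z),V(z)\rangle/\lVert V(z)\rVert^2$ is a continuous function from the connected set $\mathbb{T}$ into the finite spectrum of $U$, hence constant; once $\alpha$ is a constant, the span of $\{V(z):z\in\mathbb{T}\}$ lies in $\ker(U-\alpha I)$, and the Fourier-type identity $v_k e_k=\int_0^{2\pi}e^{-ik\theta}V(e^{i\theta})\,dm$ identifies that span with $\operatorname{span}\{e_k:v_k\neq 0\}$. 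You instead extract from $UV(z)=\lambda(z)V(z)$ that $\lambda$ is a unimodular Laurent polynomial, hence a monomial $\alpha z^m$, and then use the finite-shift obstruction $I_0+m\subseteq I_0\Rightarrow m=0$ to force constancy, finishing by a direct coefficient match on the columns of $U$. The paper's continuity/connectedness argument is shorter and avoids the coordinate computation; your algebraic argument is more explicit and self-contained, making no use of the spectral finiteness of $U$. Both rely on the one-dimensional structure $\mathcal{N}(K_z^\rho(S))=\mathbb{C}V(z)$ (so your appeal to Theorem~\ref{vk2} is not actually needed---Theorem~\ref{shift} suffices).
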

\begin{proof} By \cite[Corollary 2.24]{CaBeBel2018} we  know that, $U^*SU$ is Harnack equivalent to $S$ in $ C_{\rho}(\mathbb{C}^{n+1})$, $\rho>0$  $(\rho\neq 1)$, if and only if   $U(\mathcal{N} (K_{z}^{\rho} (S)))\subseteq \mathcal{N} (K_{z}^{\rho} (S))$, for all $z\in \mathbb{T}$, and the fact that
	$$\mathcal{N} (K_{z}^{\rho} (S))=\mathbb{C}V(z)=\mathbb{C}(v_0,zv_1, \dots,z^n v_n),\quad  \text{ for all }z\in \mathbb{T},$$ thus is equivalent to 
	\begin{equation}\label{VpU}  UV(z)=\alpha(z)V(z) ,\quad  \text{ for all }z\in \mathbb{T}.
	\end{equation}
	On the other hand,
	$$ \Vert V(1) \Vert =\Vert V(z) \Vert =\Vert UV(z) \Vert=\vert \alpha(z) \vert \Vert V(z) \Vert = \vert \alpha(z) \vert \Vert V(z) \Vert.$$ 
	So
	$$ \vert \alpha(z) \vert=1 .$$
	Using \eqref{VpU}, we get 
	$$ \alpha(z)\Vert V(1) \Vert=\langle UV(z), V(z)\rangle, ,\quad  \text{ for all }z\in \mathbb{T},$$
	thus the function $\alpha(.)$ is continuous from  $\mathbb{T}$ to the spectrum of $U$, so $\alpha(.)$ is a constant and  becomes
	\begin{equation}\label{alphaconstant}  UV(z)=\alpha V(z) , \quad  \text{ with }  \vert \alpha \vert=1     \text{ and  } z\in \mathbb{T}.
	\end{equation}
	We deduce that the space spanned by the vector $V(z)$ , $z\in \mathbb{T}$, is a subspace of $Ker(U-\alpha I)$, but this space is equal those spanned by $e_k$,  such that $v_k\neq 0$, since $v_k e_k=\int_{0}^{2\pi}e^{-ik\theta}V(e^{i\theta})dm$. 
\end{proof}
\begin{proposition}Let $\rho>1$. If $T \in C_{\rho}(\mathbb{C}^{n+1})$ is Harnack equivalent to $S$, then $Te_0=0$, $T^{*}e_n=0$ and $\langle  Te_n \mid e_0 \rangle=0$.
\end{proposition}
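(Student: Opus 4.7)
The plan is to extract information from the identity $K_{z}^{\rho}(T)V(z)=0$ valid for every $z\in\mathbb{T}$, where by Theorems \ref{shift} and \ref{vk2}
\[
V(z)=v_{0}e_{0}+zv_{1}e_{1}+\cdots+z^{n}v_{n}e_{n}
\]
generates $\mathcal{N}(K_{z}^{\rho}(T))$, with $v_{0}\neq 0$ and $v_{n}=-v_{0}\neq 0$. Theorem \ref{spectral+torus} yields $\sigma(T)\cap\mathbb{T}\subseteq \sigma(S)\cap\mathbb{T}=\emptyset$, hence the spectral radius satisfies $r(T)<1$. Consequently the two Neumann series $(I-\bar{z}T)^{-1}=\sum_{k\geq 0}\bar{z}^{k}T^{k}$ and $(I-zT^{\ast})^{-1}=\sum_{k\geq 0}z^{k}(T^{\ast})^{k}$ converge absolutely and uniformly on a neighborhood of $\overline{\mathbb{D}}$, so $K_{z}^{\rho}(T)V(z)$ admits a Laurent expansion in $z$ on $\mathbb{T}$; by Fourier uniqueness each one of its Fourier coefficients vanishes.

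The coefficient of $z^{-1}$ is carried solely by the $(I-\bar{z}T)^{-1}V(z)$ contribution and equals
\[
\sum_{j=0}^{n}v_{j}T^{j+1}e_{j}=T\!\left[\sum_{j=0}^{n}v_{j}T^{j}e_{j}\right].
\]
The coefficient of $z^{0}$ reads $\sum_{j=0}^{n}v_{j}T^{j}e_{j}=c_{\rho}\,v_{0}\,e_{0}$ for a scalar $c_{\rho}$ nonzero whenever $\rho>1$ (beyond the diagonal contribution from $(I-\bar{z}T)^{-1}V(z)$, only the $j=k=0$ term of $(I-zT^{\ast})^{-1}V(z)$ and the constant part of $K_{z}^{\rho}$ reach the $z^{0}$ coefficient, each proportional to $v_{0}e_{0}$). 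Substituting produces $c_{\rho}v_{0}Te_{0}=0$, and since $v_{0}\neq 0$ we conclude $Te_{0}=0$. A completely symmetric analysis of the coefficients of $z^{n+1}$ and $z^{n}$, now with the roles of $(I-\bar{z}T)^{-1}V(z)$ and $(I-zT^{\ast})^{-1}V(z)$ exchanged and using $v_{n}\neq 0$, delivers $T^{\ast}e_{n}=0$.

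For the third statement I examine the coefficient of $z^{n-1}$, which takes the form
\[
\sum_{j=0}^{n-2}v_{j}(T^{\ast})^{n-1-j}e_{j}+\gamma\,v_{n-1}e_{n-1}+v_{n}Te_{n}=0
\]
for an explicit scalar $\gamma$. Taking the inner product with $e_{0}$: every term of the first sum has exponent $n-1-j\geq 1$, so
$\langle(T^{\ast})^{n-1-j}e_{j},e_{0}\rangle=\langle e_{j},T^{n-1-j}e_{0}\rangle=0$ thanks to $Te_{0}=0$ (which in turn forces $T^{k}e_{0}=0$ for every $k\geq 1$); the middle term vanishes because $\langle e_{n-1},e_{0}\rangle=0$ (we work in dimension $\geq 3$, i.e.\ $n\geq 2$). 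What remains is $v_{n}\langle Te_{n},e_{0}\rangle=0$, and $v_{n}\neq 0$ forces $\langle Te_{n},e_{0}\rangle=0$.

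The main obstacle is the systematic bookkeeping of the Laurent coefficients of $K_{z}^{\rho}(T)V(z)$: one has to single out the five indices $m\in\{-1,0,n-1,n,n+1\}$ that carry all the non-redundant information (Fourier equations at the other orders are automatically implied by multiplying those five by suitable powers of $T$ or $T^{\ast}$), and then exploit the symmetry $v_{k}=-v_{n-k}$ together with the observation that $Te_{0}=0$ annihilates every inner product of the form $\langle T^{k}e_{0},\,\cdot\,\rangle$ with $k\geq 1$ in order to strip the $z^{n-1}$ equation down to its last term.
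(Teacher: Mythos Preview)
Your proof is correct and follows essentially the same route as the paper: both arguments extract the Fourier/Laurent coefficients of $K_{z}^{\rho}(T)V(z)$ at orders $0$, $-1$, and $n-1$ (the paper phrases this as multiplying by $e^{ik\theta}$ and integrating against Haar measure), combine the first two to get $(\rho-1)v_{0}Te_{0}=0$ (your constant $c_{\rho}$ is just $1-\rho$), argue symmetrically for $T^{\ast}e_{n}=0$, and then pair the order-$(n-1)$ equation with $e_{0}$, using $Te_{0}=0$, to obtain the last identity. Your explicit remark that the argument requires $n\geq 2$ is a useful caveat the paper leaves implicit.
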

\begin{proof} By Theorem \ref{shift} and the inequality \eqref{Inequ:Harnack}, we obtain
	\begin{equation*}  K_{z}^{\rho} (T)V(z)=0, \quad \text{ for all } z\in \mathbb{T},
	\end{equation*}
	with  $V(z)=(v_0,zv_1, \dots,z^n v_n)$. Hence,
	\begin{equation}\label{equ:Harnack} 
	v_0 K_{e^{i\theta}}^{\rho} (T)e_0+ e^{i\theta} v_1 K_{e^{i\theta}}^{\rho} (T)e_1+ \dots + e^{n i\theta} v_n K_{e^{i\theta}}^{\rho} (T)e_n=0,
	\end{equation}
	for all $\theta \in \mathbb{R}$. Multiplying \eqref{equ:Harnack} by $1$ and $e^{i\theta}$, and integrating with respect the Haar measure $m$ on the torus, we get
	$$
	\rho v_0 e_0+  v_1 Te_1+ \dots + v_n  T^{n}e_n=0,
	$$
	and
	$$ 
	v_0 Te_0+  v_1 T^{2}e_1+ \dots + v_n  T^{n+1}e_n=0,
	$$
	By this two last equalities, we obtain
	$$
	(\rho -1)v_0 Te_0=0.
	$$
	Since $\rho\neq 1$ and $v_0 \neq 0$, we have necessarily $Te_0=0$.
	
	Now if we proceed as above we can prove that $T^{*}e_n=0$.
	On the other hand, if we multiply equation \eqref{equ:Harnack} by $e^{-i(n-1)\theta}$ and we integrate with respect to $m$ on the torus we obtain
	$$ 
	v_0 T^{\ast (n-1)}e_0+\ddots + v_{n-2} T^{\ast}e_{n-2}+ \rho v_{n-1} e_{n-1}+ v_n  Te_n=0.
	$$
	Then, taking the scalar product with the vector  $e_0$ and using the fact that $Te_0=0$, it comes $\langle  Te_n \mid e_0 \rangle =0$.
\end{proof}
By   Proposition \ref{orbit} and Theorem \ref{vk2}, we have
\begin{corollary} Let  $S\in C_{\rho}(\mathbb{C}^{n+1})$, $\rho>1$,  with $n+1$ is an even number. The unitary orbit of $S$ in the Harnack part of $S$ is trivial.
\end{corollary}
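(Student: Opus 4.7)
The plan is direct and combines the two preceding results in one line of reasoning. First, I would invoke Theorem \ref{vk2}(1): in the even dimensional case $n+1 = 2p$, the null space of $K_{z}^{\rho}(T)$ is spanned by
$$(v_0, zv_1, \dots, z^{p-1}v_{p-1}, -z^{p}v_{p-1}, \dots, -z^{2p-1}v_0),$$
with all coefficients $v_0, v_1, \dots, v_{p-1}$ nonzero. Combined with the symmetry $v_k = -v_{n-k}$ from Theorem \ref{shift}, this shows that \emph{every} coordinate $v_k$, for $k = 0, 1, \dots, n$, is nonzero.

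Next, I would apply Proposition \ref{orbit} to any unitary $U$ such that $U^{*}SU$ is Harnack equivalent to $S$. The proposition forces $Ue_k = \alpha e_k$ for all $k$ with $v_k \neq 0$ and some fixed $\alpha \in \mathbb{T}$. By the previous step this condition holds for every $k \in \{0, \dots, n\}$, hence $U = \alpha I$ on the whole of $\mathbb{C}^{n+1}$. Then $U^{*}SU = \bar{\alpha}\alpha\, S = S$, so the unitary orbit of $S$ inside its Harnack part reduces to $\{S\}$.

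There is essentially no obstacle once Theorem \ref{vk2}(1) and Proposition \ref{orbit} are at hand: the whole point of the even dimensional nonvanishing statement is to ensure that the eigenspace $\ker(U - \alpha I)$ forced by Harnack equivalence is the entire ambient space. I would therefore write the corollary as a short paragraph, emphasizing that the parity assumption is essential because in the odd dimensional case $v_p = 0$, leaving room for a nontrivial orbit consisting of unitaries that act as scalars on $e_0, \dots, e_{p-1}, e_{p+1}, \dots, e_n$ but act freely on the middle vector $e_p$.
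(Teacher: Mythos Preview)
Your proposal is correct and follows exactly the route indicated in the paper, which simply records the corollary as an immediate consequence of Proposition~\ref{orbit} and Theorem~\ref{vk2}. Your write-up merely spells out the two-line argument: all $v_k$ are nonzero in the even case, so Proposition~\ref{orbit} forces $U=\alpha I$ and hence $U^{*}SU=S$.
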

Moreover, any element in the Harnack part of $S$ is irreducible, when $n+1$ is an even number. More precisely, we have 
\begin{proposition} Let  $S\in C_{\rho}(\mathbb{C}^{n+1})$, $\rho>1$, where $n+1$ is an even number, i.e $n=2p-1$, $p\geq 1$. If $T \in C_{\rho}(\mathbb{C}^{n+1})$ is Harnack equivalent to $S$, then $T$ is irreducible.	
\end{proposition}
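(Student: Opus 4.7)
The plan is to argue by contradiction. Suppose $T$ is reducible, so there exists a nontrivial orthogonal projection $P$ with $PT=TP$; taking adjoints gives $PT^{*}=T^{*}P$. Since $T \stackrel{H}{\sim} S$ and $\sigma(S)\cap \mathbb{T}=\emptyset$, Theorem \ref{spectral+torus} gives $\sigma(T)\cap \mathbb{T}=\emptyset$, so $(I-\overline{z}T)^{-1}$ and $(I-zT^{*})^{-1}$ extend continuously to $\overline{\mathbb{D}}$ and $P$ commutes with $K_{z}^{\rho}(T)$ for all $z \in \overline{\mathbb{D}}$. In particular, $P$ leaves $\mathcal{N}(K_{z}^{\rho}(T))$ invariant for each $z\in \mathbb{T}$.

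Since $n+1=2p$, Theorem \ref{vk2}(1) ensures that $\mathcal{N}(K_{z}^{\rho}(T))=\mathbb{C}V(z)$ is one-dimensional, where
$$V(z)=(v_0,zv_1,\dots,z^{p-1}v_{p-1},-z^{p}v_{p-1},\dots,-z^{2p-1}v_0),$$
with $v_k\neq 0$ for $k=0,\dots,p-1$. The invariance condition forces $PV(z)=\lambda(z)V(z)$ for a scalar $\lambda(z)$, and $P^{2}=P$ gives $\lambda(z)\in\{0,1\}$. Observing that $\|V(z)\|$ is independent of $z\in \mathbb{T}$, the map $z\mapsto \lambda(z)\|V(z)\|^{2}=\langle PV(z),V(z)\rangle$ is continuous on the connected set $\mathbb{T}$, hence $\lambda(z)\equiv \lambda_0$ for some $\lambda_0 \in\{0,1\}$. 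I then expand
$$V(e^{i\theta})=\sum_{k=0}^{p-1}v_{k}e^{ik\theta}e_{k}-\sum_{k=p}^{2p-1}v_{2p-1-k}e^{ik\theta}e_{k}$$
and apply Fourier orthogonality to $(P-\lambda_0 I)V(e^{i\theta})=0$: multiplying by $e^{-im\theta}$ and integrating over $\mathbb{T}$ picks out each coefficient, and since every $v_k$ appearing is nonzero, I conclude $(P-\lambda_0 I)e_k=0$ for all $k=0,1,\dots,2p-1$. Hence $P=\lambda_0 I \in \{0,I\}$, contradicting the choice of $P$.

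The substantive input is Theorem \ref{vk2}(1), which guarantees that \emph{none} of the coordinates $v_0,\dots,v_{p-1}$ vanish: this is what converts the one-dimensional invariance into $Pe_k=\lambda_0 e_k$ for every single basis vector. The parity assumption enters here crucially, because in the odd-dimensional case there is a zero coordinate at position $p$, and the Fourier argument would then fail to control $Pe_p$; so the plan genuinely uses that $n+1$ is even. The only mildly delicate step is the justification that $\lambda(z)$ is globally constant, which is handled by the continuity of $z \mapsto K_{z}^{\rho}(T)$ on $\overline{\mathbb{D}}$ (itself a consequence of $\sigma(T)\cap\mathbb{T}=\emptyset$).
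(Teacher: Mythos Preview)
Your proof is correct and follows essentially the same approach as the paper's: both show that a reducing projection must act as a scalar on each one-dimensional kernel $\mathcal{N}(K_z^\rho(T))=\mathbb{C}V(z)$, use continuity and the connectedness of $\mathbb{T}$ to force this scalar to be globally constant, and then exploit the nonvanishing of all $v_k$ from Theorem~\ref{vk2}(1) to conclude the projection is trivial. Your Fourier extraction of $(P-\lambda_0 I)e_k=0$ makes explicit what the paper leaves implicit in its final line (namely that $\mathrm{span}\{V(z):z\in\mathbb{T}\}=\mathbb{C}^{n+1}$), but the underlying argument is the same.
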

\begin{proof}Assume that $T$ is not irreducible. Then there exists an non trivial invariant subspace $E$, ($E\neq \emptyset $ and $E\neq \mathbb{C}^{n+1} $) of $T$ such that $ T= T_1 \oplus T_2$ according to the decomposition $\mathbb{C}^{n+1}=E\oplus E^{\bot}$. Also, $ K_{z}^{\rho} (T)= K_{z}^{\rho} (T_1)\oplus  K_{z}^{\rho} (T_2).$
Let $V(z)$ such that 	$V(z)=V_1(z)\oplus V_2(z)$. Put $\Omega_1=\{z\in \mathbb{T} : V_1(z)\neq 0\}$ and $\Omega_2=\{z\in \mathbb{T} : V_2(z)\neq 0\}$. Since $z\longmapsto V_1(z)$ and $z\longmapsto V_1(z)$ are continuous, we deduce that   $\Omega_1$ and $\Omega_2$ are open sets. Let $z_0\in \Omega_1 \cap \Omega_2$, we have $F=\mathbb{C}V_1(z_0)+\mathbb{C}V_2(z_0)\subset Ker(K_{z}^{\rho} (T))$. But $\dim(F) =2$ and $\dim (Ker(K_{z}^{\rho} (T)))=1 $, which is a contradiction. Hence $\Omega_1 \cap \Omega_2=\emptyset$. Since $ V(z)\neq 0$ for all $z\in \mathbb{T}$, we obtain $\Omega_1 \cup \Omega_2=\mathbb{T}$.  Now, the fact that $\mathbb{T}$ is connected, one of the open sets  $\Omega_1$ and $\Omega_2$ must be empty. This is a contraction with $E$ is not trivial.
\end{proof}
In the following result we give a complete description of an element   of $C_2 (\mathbb{C}^{n+1})$ Harnack equivalent to the $w_{2}$-normalized truncated shift $S$ with norm equal to those of $S$. 
\begin{theorem}\label{Harnack_C_2}
	Let $T$ be in $C_2 (\mathbb{C}^{n+1})$ such that $\lVert T\rVert=\lVert S\rVert=a=(\cos(\dfrac{\pi}{n+2}))^{-1}$. Assume that $T$ is Harnack equivalent to the $w_{2}$-normalized truncated shift $S$ of size $n+1$. Then the form of the matrix of $T$ depends on the parity of the dimension, more precisely we have :
	\begin{enumerate}\label{Harnack-parts}
		\item  If the dimension is an even number, i.e. $n+1=2p$, then $T=S$.
		\item  If the dimension is an odd number, i.e. $n+1=2p+1$,   then 
		\[ T=
		\begin{blockarray}{cccccccccc}
		Te_0 & \cdots &\cdots & Te_{p-1}& Te_p         & Te_{p+1}      &Te_{p+2} & \cdots&Te_{2p}\\
		\begin{block}{(ccccccccc)c}
		0      & a    &   0   & \cdots  &   \cdots     & \cdots        & \cdots  & \cdots &  0  \\
		\vdots &\ddots&\ddots &  \ddots &              &               &         &        &\vdots\\
		\vdots &      &\ddots & a       &  \ddots      &               &         &        &\vdots\\
		\vdots &      &       & \ddots  & ae^{i\theta} &   0           &         &        &\vdots\\
		\vdots &      &       &         & 0            & ae^{-i\theta} & \ddots  &        &\vdots\\
		\vdots &      &       &         &              &  \ddots       & a       & \ddots &\vdots\\
	    \vdots &      &       &         &              &               & \ddots  & \ddots & 0    \\
		\vdots &	  &       &	        &              &               &         & \ddots & a \\
		0      &\cdots&\cdots &	\cdots  &  \cdots      &  \cdots       &  \cdots & \cdots & 0  \\
		\end{block}
		\end{blockarray}
		\]
		where $\theta$ is an arbitrary real number.
	\end{enumerate}
\end{theorem}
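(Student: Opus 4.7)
My strategy is to exploit the detailed null-space structure provided by Theorem \ref{vk2}, together with the norm hypothesis, to identify $T$ up to a well-understood diagonal freedom, and then to apply Proposition \ref{orbit}. From the preceding propositions and Theorem \ref{vk2}, $T$ satisfies $Te_0 = 0$, $T^*e_n = 0$, $\langle Te_n, e_0\rangle = 0$, and
\[\mathcal{N}(K_z^2(T)) = \mathbb{C}V(z), \qquad V(z) = \sum_{k=0}^n v_k z^k e_k,\]
with $v_{n-k} = -v_k$, all $v_k$ nonzero in the even case, and $v_p = 0$ (all others nonzero) in the odd case.

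For $\rho = 2$ one has $K_z^2(T) = (I - \bar z T)^{-1} + (I - z T^*)^{-1}$, so $K_z^2(T)V(z) = 0$ on $\mathbb{T}$ expands, via Neumann series in each resolvent and Fourier matching in $z$, into a system of identities coupling the vectors $T^j e_\ell$ and $(T^*)^j e_\ell$ weighted by the $v_\ell$. In particular the modes $m < 0$ give $\sum_{\ell=1}^n v_\ell T^{\ell+|m|}e_\ell = 0$. Iterating on $|m|$ and using $Te_0 = 0$ together with the symmetry $v_{n-k}=-v_k$, these relations propagate the kernel of $T$ along the basis and force $T^{n+1} = 0$; hence $T$ is nilpotent of order $n+1$. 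Combined with $\|T\| = a = 1/\cos(\pi/(n+2))$ and $w_2(T) = 1$, this saturates the Haagerup--De la Harpe inequality $w(T) \leq \cos(\pi/(n+2))\|T\|$ valid for operators on $\mathbb{C}^{n+1}$ satisfying $T^{n+1}=0$, and the extremality characterization yields $T = U^*SU$ for some unitary $U$.

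It remains to identify the admissible unitaries $U$. By Proposition \ref{orbit}, $U^*SU$ is Harnack equivalent to $S$ iff $Ue_k = \alpha e_k$ for every $k$ with $v_k \neq 0$. In the even case all $v_k$ are nonzero, forcing $U = \alpha I$ and hence $T = S$, which is part (1). In the odd case the single vanishing coefficient $v_p$ leaves exactly one phase free, so $U = \alpha\cdot \mathrm{diag}(1,\dots,1,e^{-i\theta},1,\dots,1)$ with the off-diagonal phase at position $p$; a direct computation of $U^*SU$ reproduces the matrix in part (2). The main obstacle is the nilpotency step: the $m<0$ Fourier equations couple $T^j e_\ell$ across many $\ell$ through the symmetric coefficients, and extracting clean vanishing conditions $T^{n+1}e_\ell = 0$ requires careful induction---especially in the odd case where $v_p = 0$ removes one of the natural equations.
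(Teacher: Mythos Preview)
Your endgame via Proposition \ref{orbit} and Theorem \ref{vk2} matches the paper exactly. The divergence, and the gap, is in the step that produces a unitary $U$ with $T=U^{\ast}SU$.

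The paper does not attempt to extract nilpotency from the kernel equations. It instead invokes two external structural results: Corollary 2.16 of \cite{CaBeBel2018}, which shows that Harnack equivalence to $S$ in $C_2$ forces the numerical range $W(T)$ to be the closed unit disc, and then the numerical-range characterization of (scaled) Jordan blocks due to Wu \cite{Wu} (equivalently Theorem 5.9 of \cite{GauWu}), which, combined with the norm hypothesis $\lVert T\rVert=a$, yields directly that $T$ is unitarily equivalent to $S$. This is the only place the assumption $\lVert T\rVert=\lVert S\rVert$ is used.

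Your proposed nilpotency argument does not go through. Writing $w=\sum_{k=0}^{n} v_k T^{k}e_k$, the Fourier coefficients of $K_z^{2}(T)V(z)=0$ for $\ell<0$ give only $T^{|\ell|}w=0$, while the coefficient at $\ell=0$ gives $v_0 e_0+w=0$; together these merely recover $Te_0=0$, which you already knew. There is no mechanism in these relations to propagate vanishing to $T^{n+1}e_\ell=0$ for general $\ell$, and the symmetry $v_{n-k}=-v_k$ does not supply one. You flag this yourself as the ``main obstacle'' but do not resolve it, and without $T^{n+1}=0$ the Haagerup--De la Harpe extremality argument cannot even be invoked, since that inequality presupposes nilpotency. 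The paper's route through the Wu characterization avoids this issue entirely.
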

\begin{proof}
	Let $T$ be an operator satisfying the assumptions of Theorem \ref{Harnack-parts}. 
	Applying Corollary 2.16 of \cite{CaBeBel2018} we get that the numerical range of $T$ is the closed unit disc $\overline{\mathbb{D}}$. Then using Theorem 1 of \cite{Wu} or Theorem 5.9 of \cite{GauWu}, we see that there exists a unitary operator $U\in B(\mathbb{C}^{n+1})$ such that $T=U^{\ast}SU$. Now, thanks to
	Theorem \ref{vk2} and Proposition \ref{orbit} we derive the desired result.
\end{proof}

We end this paper by proposing the following open questions.

{\it{Question 1}} \hspace{0.3 cm}  Can we remove the assumption $\lVert T\rVert=\lVert S\rVert$ in Theorem \ref{Harnack_C_2}?

Notice that the answer is positive when the dimension is two or three (see \cite[Theorems 3.1 and 3.3]{CaBeBel2018}).

And more generally, one can ask

{\it{Question 2}} \hspace{0.3 cm} Let $T$ be Harnack equivalent in $C_\rho (\mathbb{C}^{n+1})$, $\rho>1$, to the $w_{\rho}$-normalized truncated shift $S$ of size $n+1$. Is $T$ of the form given in Theorem \ref{Harnack_C_2} with respect to the parity of the dimension?

The second author wishes to note that the original idea of this paper is due
to the first author.
\bibliographystyle{amsplain}

\end{document}